\newcommand{\R}{\mathbb{R}}
\newcommand{\N}{\mathbb{N}}
\newcommand{\X}{\mathcal{X}}
\newcommand{\Y}{\mathcal{Y}}
\newcommand{\U}{\mathcal{U}}
\newcommand{\st}{ \ \big| \ }
\newcommand{\A}{\mathcal{A}}
\newcommand{\Spec}[1]{\ensuremath {  \text{Spec} ( {#1}  )}}
\renewcommand{\and}{\text{and}}
\newcommand{\V}{\mathcal{V}}
\newcommand{\affin}[1]{ \ensuremath{ \mathcal{M} (\mathcal{#1})  } }
\newcommand{\Z}[1]{\mathbb{Z}/#1 \mathbb{Z} }
\newcommand{\Co}{ H^i_c(S,\mathbb{Z}_\ell ) } 
\newcommand{\Coh}{ H^i_c(S, \mathbb{Z} / \ell^n \mathbb{Z} ) } 
\newcommand{\Coho}[1]{ H^i_c(S,\mathbb{Z}/#1 \mathbb{Z} ) }
\newcommand{\Gal}{\text{Gal}(k^{sep} / k ) }  
\newcommand{\ibid}{\textit{ibid.} }
\newcommand{\Zl}{\ensuremath{\mathbb{Z}_\ell}}
\newcommand{\Ql}{\ensuremath{\mathbb{Q}_\ell}}
\newcommand{\CB}{\ensuremath{ \overset{L}{ \underset{ \mathbb{Z} / \ell^n \mathbb{Z} }{\otimes}    } } } 
\newcommand{\Zln}{\ensuremath{\mathbb{Z} / \ell^n \mathbb{Z}}}
\newcommand{\Zlnm}{ \ensuremath{ \mathbb{Z} / \ell^{n-1} \mathbb{Z}   }}
\newcommand{\HXR}{\ensuremath{  \mathbf{R} \Gamma_c((X,R) , \Zln )   }}
\newcommand{\HYT}{\ensuremath{  \mathbf{R} \Gamma_c((Y,T) , \Zln )   }}
\newcommand{\HXY}{\ensuremath{  \mathbf{R} \Gamma_c((X,R)\times(Y,T) , \Zln )   }}
\newcommand{\PT}{  \ensuremath{ \underset{\Zln}{\otimes} }   }
\newcommand{\CBl}{  \ensuremath {  \underset{\mathbb{Z}_\ell}{ \overset{L}{\otimes}}  }  }
\newcommand{\Req}{ \ensuremath{ \stackrel{\sim}{\rightarrow}  }}
\newcommand{\Leq}{ \ensuremath{ \stackrel{\sim}{\leftarrow}  }}
\newcommand{\PTZlL}{  \ensuremath{ \overset{L}{\underset{\mathbb{Z}_\ell}{\otimes} }}   }
\newcommand{\PTQlL}{  \ensuremath{ \overset{L}{ \underset{\mathbb{Q}_\ell}{\otimes} } }  }
\newtheorem{theo}{Theorem}[section]
\newtheorem{rem}[theo]{Remark}
\newtheorem{defi}[theo]{Definition}
\newtheorem{prop}[theo]{Proposition}
\newtheorem{lemme}[theo]{Lemma}
\newtheorem*{theor}{Theorem}
\newtheorem*{propo}{Proposition}
\title{Cohomology of locally-closed semi-algebraic subsets}
\author{Florent Martin} 
\thanks{ The research leading to these results has received funding from the European Research Council under the European Community's Seventh Framework Programme (FP7/2007-2013) / ERC Grant Agreement n$^o$ 246903 "NMNAG}
\address{Institut de Math\'{e}matiques de Jussieu\\
Universit\'{e} Pierre-et-Marie-Curie\\
4 Place de Jussieu, 75005 Paris}
\email{fmartin@math.jussieu.fr}
\begin{document}
\date{\today}
\setcounter{tocdepth}{1} 
% permet de ne pas afficher les sous-sections dans la table des matières.
\maketitle

\begin{abstract}
Let $k$ be a non-Archimedean field, let $\ell$ be a prime number 
distinct from the characteristic of the residue field of $k$.
If $\X$ is a separated $k$-scheme of finite type, 
Berkovich's theory of germs allows to define \'etale $\ell$-adic cohomology groups with compact support of locally closed semi-algebraic subsets of $\X^{an}$. 
We prove that these vector spaces are finite dimensional continuous representations of the 
Galois group of $k^{sep}/k$, and satisfy the usual long exact sequence and K\"unneth formula. This has been recently used by E. Hrushovski and F. Loeser
in a paper about the monodromy of the Milnor fibration. 
In this statement, the main difficulty is the finiteness result, whose 
proof relies on a cohomological finiteness result for affinoid spaces, recently proved by V. Berkovich.
\end{abstract}

\tableofcontents

\section{Introduction}
Let $k$ be a non-Archimedean\footnote{This means that $k$ is equipped with an 
ultrametric norm $| \cdot | : k \to \R_+$ for which it is complete.}
field and $\mathcal{X}$ 
a separated $k$-scheme of finite type. One can associate to it 
a $k$-analytic space $\mathcal{X}^{an}$ \cite{Berko90}. 
Using \cite{Berko93} one 
can define $\ell$-adic cohomology groups 
$H^i_c(\mathcal{X}^{an} , \mathbb{Q}_\ell )$ which have good properties if 
$\ell$ is different from char$(\tilde{k})$ (in particular, they are finite dimensional vector 
spaces when $k$ is algebraically closed). \par 
If the $k$-scheme $\mathcal{X} =$Spec$(A)$ is affine, a subset $U$ of $\mathcal{X}^{an}$ is called 
\emph{semi-algebraic} if it is a finite Boolean combination of subsets 
of the form $\{ x\in \mathcal{X}^{an} \ \big| \ |f(x)| \leq \lambda |g(x)| \}$ 
where $f$ and $g$ belong to $A$ and $\lambda$ is a positive real number. 
This definition of semi-algebraic subsets extends to general $k$-varieties (see Definition \ref{defisagen}). 
If $U$ is a semi-algebraic subset of 
$\mathcal{X}^{an}$,
using the theory of $k$-germs developed in 
\cite{Berko93}, it is possible to define 
cohomology groups of the $k$-germ $(\X^{an},U)$, that we will denote by $H^i_c(U,\mathbb{Q}_\ell)$. We want to point out that in general, $U$ is not equipped with a structure of 
$k$-analytic space.
\par 
In this paper, we generalize the finiteness property mentioned above to  
locally closed semi-algebraic subsets of $\mathcal{X}^{an}$.
More precisely, let $\widehat{k^a}$ be the completion of the algebraic closure of 
$k$ and let us set $\overline{\X} := \mathcal{X}^{an} \widehat{\otimes}_k \widehat{k^a}$ and 
$\pi :\overline{\X}   \to \mathcal{X}^{an}$ the natural morphism. If 
$U$ is a subset of $\X^{an}$, we set 
$\overline{U} := \pi^{-1} (U)$. Our main result is then: 

\begin{theor} \textbf{\ref{theoprinc}}
Let $\mathcal{X}$ be a separated 
$k$-scheme of finite type of dimension $d$, $U$ a locally closed semi-algebraic subset of 
$\mathcal{X}^{an}$, and $\ell \neq $char$(\tilde{k})$ be a prime number. 
\begin{enumerate}
 \item The groups 
$H^i_c( \overline{U} , \mathbb{Q}_\ell) $ are finite dimensional $\mathbb{Q}_\ell$-vector spaces, 
on which $\Gal$ acts continuously, and $H^i_c( \overline{U} , \mathbb{Q}_\ell) =0$ for 
$i>2d$.
\item  If $V\subset U$ is a semi-algebraic subset which is open in $U$ and $F=U \setminus V$, then there is a long exact sequence of Galois modules 
\[\xymatrix{
\ar@{.>}[r] & H^i_c(\overline{V} , \mathbb{Q}_\ell ) \ar[r] &
H^i_c(\overline{U} , \mathbb{Q}_\ell ) \ar[r] & H^i_c(\overline{F} , \mathbb{Q}_\ell ) 
\ar[r] & H^{i+1}_c(\overline{V} , \mathbb{Q}_\ell ) \ar@{.>}[r] &
} \]
\item 
For all integer $n$ there are canonical isomorphisms of Galois modules: 
\[ 
\bigoplus_{i+j=n} H^i_c \left(\overline{U} , \mathbb{Q}_\ell \right) \otimes H^j_c \left( \overline{V}, \mathbb{Q}_\ell \right) 
\simeq H^n_c \left( \overline{U\times V} , \mathbb{Q}_\ell \right).
\]
\end{enumerate}
\end{theor}
We prove more generally this result when 
$\X$ is a separated $\A$-scheme of finite type where $\A$ is a $k$-affinoid algebra. The above result corresponds to the case 
$\A=k$. \par
This question was raised by 
F. Loeser and used in \cite{HrLo} where they study the Milnor fibration associated 
to a morphism $f: X \to \mathbb{A}^1_{\mathbb{C}}$ where $X$ is a smooth complex 
algebraic variety. The non-Archimedean field is then $k = \mathbb{C}((t))$.\par
In fact, the main point in the above theorem is to prove that when $k$ is algebraically closed, the groups 
$H^i_c(\overline{U}, \Z{\ell^n})$ are finite.
This is obtained as a consequence of another analogous result which does not involve algebraic objects. If $X$ is a $k$-affinoid space whose affinoid algebra is $\mathcal{A}$, we say that a subset $S$ of $X$ is semianalytic if it is a finite Boolean combination of subsets of the form $\{ x\in X \ \big| \ |f(x)| \leq \lambda |g(x)| \}$ 
where $f$ and $g$ belong to $\mathcal{A}$ and $\lambda>0$. 
Now if $X$ is a compact $k$-analytic space, we say that 
a subset $S\subset X$ is \emph{$G$-semianalytic} if there exists 
a finite covering $\{X_i\}$ of $X$  
by some affinoid domains such that for each $i$, 
$S\cap X_i$ is semianalytic in $X_i$. 
We then prove:
\begin{propo} \textbf{\ref{propber}}
Let us assume that $k$ is algebraically closed, and let $X$ be a compact $k$-analytic space. Then for any locally 
closed $G$-semianalytic subset $S$ of $X$ and $\Lambda$ a finite group whose cardinal is prime 
to char$(\tilde{k})$, the groups 
$H^q_c( (X,S), \Lambda )$ are finite. 
\end{propo}
We want to point out that this result relies deeply on the cohomological finiteness 
of affinoid spaces which has been recently\footnote{In a previous version of this work (arXiv:1210.4521, [v1]), the results of \cite{Berko13} were not available. 
Instead, we used \cite[Corollary 5.5]{Berko94} which is a cohomological finiteness result for affinoid spaces which are algebraizable in some sense. 
As a consequence, in this previous version, we obtained less general results. 
In particular, we could not prove proposition \ref{finitude}.} proved by V. Berkovich in \cite{Berko13}.\par 
We also want to mention that in the author's thesis (section 2.4) it is proved that our finiteness result proposition \ref{finitude} is also true 
if we assume that $S$ is an overconvergent subanalytic subset\footnote{If $X$ is a $k$-affinoid space, 
the class of overconvergent subanalytic subsets of $X$, introduced by H. Schoutens in \cite{Sch_sub}, properly contains the class of semianalytic subsets of $X$.} of $X$.
\par
Finally in section \ref{huber}, we explain some counterparts of these finiteness results 
for Huber's adic spaces.

\textbf{Acknowledgements} I would like to express my deep gratitude to 
J.F. Dat and A. Ducros for their encouragements and advices. In particular I would like to thank A. Ducros who suggested me to work on the question that F. Loeser had asked to him, 
and to use $k$-germs. 

\subsection*{Notations}
In what follows, $k$ will be a complete non-Archimedean field, $X$ a Hausdorff 
$k$-analytic space, and $S\subset X$ will always be a locally closed subset of $X$.\par 
\subsubsection*{The \'etale site of a germ} (see \cite[3.4]{Berko93})
If $S$ is a subset of $X$, 
$(X,S)$ is called a $k$-germ. 
If $(Y,T)$ is another $k$-germ, a morphism of 
$k$-germs $f:(Y,T)  \to (X,S)$ is a morphism of 
$k$-analytic spaces $f:Y \to X$ such that 
$f(T)\subset S$. This defines the category of $k$-germs. \par  
Then the category of $k$-$\mathscr{G}$erms is defined as the localization of the category 
of $k$-germs by the morphisms of 
$k$-germs $\varphi : (Y,T) \to (X,S)$ which induce an isomorphism of 
$Y$ with some open neighbourhood of $S$ in $X$ (this implies that 
$\varphi$ induces a homeomorphism between $T$ and $S$). 
It is important to remark that 
the functor $k$-$\mathscr{A}$n $\to k$-$\mathscr{G}$erms defined by 
$X \to (X, |X|)$ is fully faithful. \par 
A morphism of $k$-$\mathscr{G}$erms $f$ is called \'etale if it has a representative 
$\varphi:(Y,T)\to(X,S)$ such that $\varphi : Y \to X$ is \'etale and 
$T=\varphi^{-1}(S)$.
Berkovich defines the small \'etale site $(X,S)_{\text{\'et}}$ of the $k$-germ $(X,S)$, 
as the category 
of \'etale morphisms above $(X,S)$, a family 
$(Y_i,T_i) \xrightarrow[]{f_i} (Y,T)$ being a covering if 
$\cup_if_i(T_i)=T$.
% that is equipped [\ibid 4.2]  with  a Grothendieck topology, and defines
The category of abelian sheaves on  $(X,S)_{\text{\'et}}$ 
is denoted by $\boldsymbol{S}(X,S)_{\text{\'et}}$.
\subsubsection*{Cohomology groups with compact support} (see \cite[5.1]{Berko93})
If $S$ is a topological space, a family of supports 
$\Phi$ is a family of closed subsets of $S$ 
which is stable under finite unions and such that if $F$ is a closed subset of $S$ and 
$F \subset T$ for some $T \in \Phi$  then $F\in \Phi$. 
If $\Phi $ is a family of supports of $S$, and  $A \subset S$ we set
\begin{equation} 
\label{support}
\Phi_A = \{F\in \Phi \ \big| \ F\subset A \}
\end{equation}
which is a family of supports of  $A$.\par 
A family of supports $\Phi$ is said to be paracompactifying if for all $A\in \Phi$, $A$ is paracompact, and
if for all $A\in \Phi$, there exists  $B\in \Phi$ which is a neighbourhood of $A$. \par 
If $\Phi$ is a family of supports, the following functor 
\begin{equation}
\label{cohosupp}
\begin{array}{crcl}
\Gamma_\Phi : & \boldsymbol{S}(X,S)_{\text{\'et}} & \rightarrow & \text{Ab} \\
           & F & \mapsto & \{s \in F(X,S) \ \big| \ \text{supp}(s) \in \Phi \} 
\end{array}
\end{equation}
is left exact. Its right derived functors are denoted by 
$H^q_\Phi ((X,S),F)$. \par 
Let us assume that $S$ is locally closed in  $X$, that $T$ is an open subset of $S$, 
and that $R:=S \setminus  T$. If $\Phi$ is a paracompactifying family of 
supports of $S$, and 
$F$ is an abelian sheaf on $(X,S)$, there is a 
long exact sequence \cite[5.2.6 (ii)]{Berko93}:
\begin{align}
\label{longexacts}
\cdots \to H^{q-1}_{\Phi_R} ( (X,R) , F_{(X,R)} ) 
 \to H^q_{ \Phi_T} ((X,T),F_{(X,T)}) \to H^q_{\Phi} ( (X,S) ,F) 
 \to H^q_{\Phi_R} ((X,R) , F_{(X,R)} ) \to \cdots . 
\end{align} \par
Now, we denote by $C_S$ the family of compact subsets of $S$. 
If $S$ is Hausdorff, this is a family of supports, and if 
$S$ is locally compact, $C_S$ is paracompactifying. 
Remind that if $S$ is a locally closed subset of a locally compact 
topological set $X$, $S$ is also locally compact. 
Since we will always consider locally closed subsets $S$ of some Hausdorff 
$k$-analytic space $X$, the family of supports $C_S$ will then be paracompactifying.  
If $F \in \boldsymbol{S}(X,S)_{\text{\'et}}$,  
we will denote by 
\[H^q_c( (X,S), F):= H^q_{C_S} ((X,S),F)\] 
the associated right derived functors. \par 
Let $T\subset S$ be an open subset of $S$ and 
$R:=S\setminus T$ the complementary closed subset of $S$.
Then, ${(C_S)}_T =C_T$ because being compact in  $T$ or in $S$ is equivalent; 
likewise ${(C_S)}_R = C_R$. 
In this context, the long exact sequence \eqref{longexacts} 
can be written: 
\begin{align}
\label{sel1}
\cdots \to H^{q-1}_c((X,R),F_{(X,R)}) \to  H^q_c((X,T), F_{(X,T)}) \to 
H^q_c((X,S),F) \to H^q_c((X,R),F_{(X,R)})\to\cdots 
\end{align}

\subsubsection*{What we will look at}
Let $\Lambda$ be a finite abelian group whose cardinal is prime to the characteristic of $\tilde{k}$. 
We set
\begin{equation}
\label{coh}
H^n_c(S,\Lambda ) := H^n_c ((X,S),\underline{\Lambda})
\end{equation} 
where
$\underline{\Lambda}$ is the constant sheaf of value 
$\Lambda$ on  $(X,S)_{\acute{e}t}$. This notation is abusive since the cohomology 
of $S$ itself is meaningless, only the cohomology 
of the $k$-germ $(X,S)$ can be defined. 
Nonetheless, we will use the notation $\eqref{coh}$ to simplify the exposition. \par 
If we still denote by 
$\underline{\Lambda}$ the constant sheaf of value $\Lambda$ on $(X,S)_{\acute{e}t}$, then if $U \subset S$, 
$\underline{\Lambda}_{(X,U)}$ is isomorphic to the constant sheaf of value 
$\Lambda$ on $(X,U)_{\acute{e}t}$. 
Hence, if 
$T$ is an open subset of $S$ and $R:=S \setminus T$, 
the long exact sequence \eqref{sel1} becomes 
\begin{equation}
 \label{sel}
 \cdots \to H^{q-1}_c(R,\Lambda) \to  H^q_c(T,\Lambda) \to 
H^q_c(S,\Lambda) \to 
H^q_c(R,\Lambda) \to  \cdots 
\end{equation}

\subsubsection*{Quasi-immersions (see \cite[4.3]{Berko93})}
A morphism of $k$-germs 
$\varphi : (Y,T) \to (X,S)$ is called a quasi-immersion if 
$\varphi$ induces a homeomorphism of $T$ with its image 
$\varphi(T)$ and for all $y\in T$, if we set 
$x:=\varphi(y)$, the maximal purely inseparable 
extension 
of $\mathcal{H}(x)$ in $\mathcal{H}(y)$ is everywhere dense in $\mathcal{H}(y)$.\par
Here are two examples of quasi-immersions that we will use frequently. 
If $U$ is an analytic domain of $X$, 
and $\varphi : U \to X$ is the natural inclusion morphism, then 
$\varphi: (U,U) \to (X,U)$ is a quasi-immersion. 
If $\varphi : Z \to X$ is a closed immersion, then 
$(Z,Z) \to (X,Z)$ is a quasi-immersion. 
Moreover, quasi-immersions are stable under composition and base change.\par 
Quasi-immersions will be very important for us through the following result: 
\begin{propo} \cite[4.3.4 (i)]{Berko93}
If $\varphi:(Y,T) \to (X,S)$ is a quasi-immersion of 
$k$-germs, it induces an equivalence of categories 
 \[ \boldsymbol{S}(Y,T)_{\text{\'et}} \simeq  \boldsymbol{S}(X,\varphi(T))_{\text{\'et}}.\]
\end{propo}
In particular if $U$ is an analytic domain of $X$, there are isomorphisms
\begin{equation}
\label{invdom}
H^q_c(U, \Lambda) \simeq H^q_c( (U,U) ,\Lambda) \simeq H^q_c( (X,U) , \Lambda). 
\end{equation} 
Similarly, if $Z$ is a closed $k$-analytic subset of 
$X$, 
\begin{equation}
\label{invZar}
H^q_c(Z, \Lambda)\simeq H^q_c( (Z,Z), \Lambda) \simeq H^q_c( (X,Z) , \Lambda).
\end{equation} 
We want to stress out that \eqref{invdom} and \eqref{invZar} partly justify the abuse of the notation made in \eqref{coh}.

\section{Semianalytic and semi-algebraic sets}

\begin{defi}
\label{defi1}
Let $\A$ be a $k$-affinoid algebra, and let $X=\affin{A}$ be the associated $k$-affinoid space.
A subset $S \subset X $ is called \emph{semianalytic} if it is a finite Boolean combination\footnote{Since we authorize finite Boolean combinations, we could have also authorized some $<$ and $=$ in this definition.} of sets of the form
\begin{equation}
\label{defsa}
\{x\in X  \ \big|  \ |f(x)| \leq \lambda |g(x)| \} 
\end{equation}
where $f$, $g\in \mathcal{A}$, 
and $\lambda$ is a positive real number.
\end{defi}
If one takes $g=0$ in \eqref{defsa}, one sees that 
if $f\in \mathcal{A}$, the hypersurface 
$V(f) = \{x\in X \st f(x)=0\}$ is semianalytic. 
More generally, any Zariski-closed subset of $X$ is semianalytic. 
Using the Gerritzen-Grauert theorem, one can also 
check that any affinoid domain of $X$ is semianalytic.
\begin{defi}
%[\cite{LR2001} 6.1]
\label{defsan}
Let $X$ be a compact $k$-analytic space. 
A subset $S \subseteq X$ is called \emph{G-semianalytic} 
if there exists  a finite cover 
$X= \cup_{i=1}^n U_i$ by affinoid domains such that 
$S\cap U_i$ is semianalytic in $U_i$ for all $i$.
\end{defi}
\begin{rem}
Let $X$ be a $k$-affinoid space. If $S$ is a semianalytic subset of $X$, then it is also 
a $G$-semianalytic subset of $X$ (just consider the trivial cover with $n=1$ and 
$X_1=X$). The converse is false: if 
$S\subset X$ is $G$-semianalytic, it is not necessarily semianalytic in $X$. \par
We could have said that $S$ is locally semianalytic if for each point 
$x\in X$, there is some affinoid neighbourhood $V$ of $x$ such that 
$S\cap V$ is semianalytic in $V$. With this definition, for a subset 
$S$ of a $k$-affinoid space $X$, the following implications hold
\[ S \ \text{is semianalytic} \ \Rightarrow 
S \ \text{is locally semianalytic} \ \Rightarrow \ S \ \text{is G-semianalytic}.\] 
However, these three classes of subsets are pairwise distinct. For more details on this, we refer to 
\cite{LR2001} and \cite{MarOve}.
\end{rem}
We remind the following definition of \cite[2.1]{Duc_sa}:

\begin{defi}
\label{defisemalg}
Let $\mathcal{A}$ be a $k$-affinoid algebra, 
$\mathcal{B}$ an $\mathcal{A}$-algebra of finite type, and 
$\X = \Spec{\mathcal{B}}$. A subset $S\subset \X^{an}$ is called 
\emph{semi-algebraic} if 
it is a Boolean combination of subsets
\[ \big\{ x\in \X^{an} \st \ |f(x)| \leq \lambda |g(x)| \big\} \]
where $f,g\in \mathcal{B}$ and $\lambda \in \R_+^*$.
\end{defi}

\begin{rem}
We want to point out that this definition depends on the algebraic datum 
$\X = \Spec{ \mathcal{B}}$ and not only on the $k$-analytic space $\X^{an}$.  \par
For instance, let us consider the case $\A =k$ and 
$\X = \Spec{ k[T_1,T_2]}$, so that 
\begin{equation}
\label{pres}
\X^{an} \simeq \mathbb{A}^{2,an}_k.
\end{equation}
Let us consider $S=\{x\in \mathbb{A}^{2,an}_k \st T_2(x)=0\}$. Then $S$ is semi-algebraic in $\mathbb{A}^{2,an}_k$ with respect to the presentation \eqref{pres}. 
Let then $f=\sum_{n\geq 0} a_nT_1^n$ be a series with $a_n\in k$ whose 
radius of convergence is infinite, and such that $f$ is not a polynomial. 
Let us then consider the automorphism of $\mathbb{A}^{2,an}_k$  defined by 
\[ \begin{array}{clcl}
\varphi : & \mathbb{A}^{2,an}_k & \to & \mathbb{A}^{2,an}_k \\
           & (T_1,T_2) & \mapsto & (T_1,T_2+f(T_1)

\end{array}
\]
It is easy to check that 
$\varphi(S) \subset \mathbb{A}^{2,an}_k$ is not semi-algebraic any more with respect to the presentation \eqref{pres}. \par 
So, in order to be precise, when one talks about a semi-algebraic subset $S \subset \X^{an}$, one should always specify the algebraic datum $\X$. However, for simplicity, we will not do it when the algebraic presentation will be clear from the context.
\end{rem}

\begin{lemme}
\label{lemmesa1}
Let $\A$ be a $k$-affinoid algebra, $\X$ an affine $\A$-scheme of finite 
type, and let $\X = \U_1 \cup \ldots \cup \U_n$ be an affine covering of $\X$. 
If $S\subset \X^{an}$, then $S$ is semi-algebraic in $\X^{an}$ 
if and only if $S \cap \U_i^{an}$ is semi-algebraic in 
$\U_i^{an}$ for all $i$.
\end{lemme}

\begin{proof}
Let $S\subset \X^{an}$. \par
In one hand it follows from definition~\ref{defisemalg} that if $S$ is semi-algebraic in 
$\X^{an}$, 
then $S\cap \U_i^{an}$ is also semi-algebraic in $\U_i^{an}$, using 
the restriction morphisms 
$\mathcal{O}_\X(\X) \to \mathcal{O}_\X(\U_i)$. \par 
Conversely, if $S\cap \U_i^{an}$ is semi-algebraic in $\U_i^{an}$ for all $i$, 
according to 
\cite[2.6]{Duc_sa}, $S \cap \U_i^{an}$ is also semi-algebraic in $\X^{an}$. 
Since $S = \cup_{i=1}^n (S \cap \U_i^{an} )$, 
it follows that $S$ is semi-algebraic in $\X^{an}$.     
\end{proof}

\begin{lemme}
\label{lemmesa2}
Let $\X$ be a separated $\A$-scheme of finite type, and 
$S\subset \X^{an}$. Let  $\X = \U_1 \cup \ldots \cup \U_n$ 
be some affine covering of $\X$. The following statements are equivalent:
\begin{enumerate}
\item 
For $i=1\ldots n$, $S\cap \U_i^{an}$ is semi-algebraic in $\U_i^{an}$.
\item For every open affine subscheme 
$\V \subset \X$, $S\cap \V^{an}$ is semi-algebraic in $\V^{an}$. 
\end{enumerate}
\end{lemme}

\begin{proof}
Let us assume that the condition $(1)$ is fulfilled. 
Let $\V$ be some open affine subscheme of $\X$. Then 
$\U_i\cap \V$ is an open affine subscheme of $\U_i$ (because $\X$ is separated), and 
since $S\cap \U_i^{an}$ is semi-algebraic in $\U_i^{an}$, 
$S\cap (\U_i \cap \V)^{an}$ is semi-algebraic in $(\U_i \cap \V)^{an}$. 
Since the family $\{\U_i \cap \V \}$ is a finite affine covering of 
$\V$ and since 
$S \cap \V^{an} = \cup_{i=1}^n S\cap(\U_i \cap \V)^{an}$, thanks to the previous lemma, 
$S \cap \V^{an}$ is semi-algebraic in $\V^{an}$, that is to say $(2)$ is true.\par
For the converse statement one just has to take $\V= \U_i$. 
\end{proof}

\begin{defi}
\label{defisagen}
Let $\mathcal{A}$ be a $k$-affinoid algebra, 
and let $\X$ be a separated $\A$-scheme of finite type. 
A subset $S\subset \X^{an}$ is called semi-algebraic if 
it satisfies one of the two equivalent conditions of lemma \ref{lemmesa2}.
\end{defi}

\begin{rem}
\label{remindcouv}
Thanks to lemma \ref{lemmesa2}, we can check whether $S\subset \X^{an}$ is semi-algebraic, 
with an affine covering of $\X$, and this does not depend on the covering. 
Moreover, if $\X$ is affine, according to lemma \ref{lemmesa1}, 
the two definitions \ref{defisemalg} and \ref{defisagen} of a semi-algebraic set are equivalent.
\end{rem}

We have already used the following result, which is proved as a consequence of the 
quantifier elimination in ACVF:
\begin{propo} \cite[2.5]{Duc_sa}
Let $\mathcal{A}$ be a $k$-affinoid algebra, $\X$ and $\Y$ some affine 
$\A$-schemes of finite type, $f:\X \to \Y$ an $\A$-morphism of finite type, 
and $S$ a semi-algebraic subset of $\X^{an}$. Then $f^{an}(S)$ is a semialgebraic subset 
of $\Y^{an}$.
\end{propo}

Thanks to lemma \ref{lemmesa1} and \ref{lemmesa2}, it has the immediate generalization: 
\begin{prop} 
\label{propstabsa}
Let $\mathcal{A}$ be a $k$-affinoid algebra, $\X$ and $\Y$ some separated 
$\A$-schemes of finite type, $f:\X \to \Y$ an $\A$-morphism of finite type, 
and $S$ a semi-algebraic subset of $\X^{an}$. Then $f^{an}(S)$ is a semialgebraic subset 
of $\Y^{an}$.
\end{prop}

\begin{rem}
We want to point out that definition \ref{defisagen} both generalizes the definition of a semianalytic set of an affinoid space, and the more classical definition of a 
semi-algebraic set of some $\X^{an}$ where 
$\X$ is an affine $k$-scheme of finite type.
\end{rem}

\begin{rem}
\label{remaffi}
Let $\A$ be a $k$-affinoid algebra. 
Let $\X$ be an affine $\mathcal{A}$-scheme of finite type, $V$ be an 
affinoid domain of $\X^{an}$, and let
$S \subset \X^{an}$ be a semi-algebraic set.  
Then
$S\cap V$ is semianalytic in $V$. 
This is a straightforward consequence of the above definitions.
\end{rem}
\begin{lemme}
\label{lemmesemalgaffi}
Let $\mathcal{X}$ be a separated $\mathcal{A}$-scheme of finite type, 
$S \subset \X^{an}$ a semi-algebraic subset of 
$\X^{an}$ and 
$V$ an affinoid domain of $\X^{an}$. Then 
$S\cap V$ is $G$-semianalytic in $V$.
\end{lemme}
\begin{proof}
It is possible to find a finite covering of $V$ by affinoid domains 
$V= \cup_{i=1}^n V_i$, and some affine open subschemes 
$\U_i$ of $\X$ such that 
$V_i \subset \U_i^{an}$ for all $i$. 
Then $S\cap V_i   =   (S\cap \U_i^{an})\cap V_i$, and since 
$S \cap \U_i^{an}$ is semi-algebraic in $\U_i^{an}$ and 
$V_i \subset \U_i^{an}$, $S\cap V_i$ is semianalytic in $V_i$ according to remark \ref{remaffi}.
\end{proof}
In fact, in the above lemma, one can check that $S\cap V$ is even locally semianalytic.

\section{A finiteness result in the affinoid case}
\label{secfinaff}
In this section $k$ will be a (complete) non-Archimedean algebraically closed field. 
We consider  a $k$-affinoid algebra $\mathcal{A}$, and 
we set $X = \affin{A}$.
We remind that $\Lambda$ is a finite abelian group whose order is prime to the characteristic of $\tilde{k}$. 
The goal of this section is to prove proposition \ref{finitude}.
\begin{lemme}
\label{lemme1}
Let $n\in \N$,  and for $i=1 \ldots n$, let $f_i,g_i \in \mathcal{A}$,    
$\Diamond_i \in \{ < , \leq \}$ and $\lambda_i > 0$ be a positive real 
number. 
Let us consider   
\[S= \bigcap_{i=1}^n \{ x\in X \ \big| \ |f_i(x)| \Diamond_i \lambda_i |g_i(x)| \}.\]
Then, the groups $H_c^q(S,\Lambda ) $ are finite for all $q\in \mathbb{N}$.
\end{lemme}

\begin{proof}
We prove the lemma by induction on $n$.\par
If $n=0$, then  $S=X$ and the result is a consequence of the 
finiteness result  \cite[Theorem 1.1.1]{Berko13}. \par
Let $n\geq 0$ and assume that the result is true for $n$. 
Let $f,g \in \mathcal{A}$, $\lambda >0$, and let  
\begin{align*}
S= & \bigcap_{i=1}^n \Big\{ x\in X \ \big| \ |f_i(x)| \Diamond_i \lambda_i|g_i(x)|, \ i=1\ldots n \Big\} \\  
T=&\{x\in S \ \big| \ |g(x)| \lambda < |f(x) | \} \\
R =& \{x\in S  \ \big| \ |f(x)| \leq \lambda |g(x)| \} = S\setminus T.
\end{align*}
Let us show that the groups 
$H^q_c(T, \Lambda)$  and $H^q_c(R , \Lambda)$ are finite. This will achieve our induction step.\par 

By its definition, $S$ is a locally closed subset of $X$, $T$ is an
open subset of $S$, and $R = S \setminus T$ is the complementary closed subset of $S$.  
So we can apply the long exact sequence  \eqref{sel} 
to $S$, $R$ and $T$. By induction hypothesis, the groups 
$H^q_c(S, \Lambda)$ are finite, so if
we show that the groups $H^q_c(R , \Lambda)$ are finite, this will also prove 
the finiteness of the groups $H^q_c(T, \Lambda)$. Let us then show that the 
groups $H^q_c(R , \Lambda)$ are finite.\par
Let $Y = \mathcal{M} \left( \mathcal{A}\{\lambda^{-1}U\}/(f-Ug ) \right)$
and let $\varphi : Y \to X $ be the morphism of affinoid spaces induced by the natural map 
$\mathcal{A} \to \mathcal{A}\{\lambda^{-1}U\}/(f-Ug )$.  
The morphism $\varphi$ induces an isomorphism between the analytic domain of $Y$:
\[A = \{y \in Y \ \big| \ g(y) \neq 0 \} \] 
and the analytic domain of $X$:
\[B=\{x\in X \ \big| \ |f(x)| \leq \lambda |g(x)| \ \text{and} \ g(x)\neq 0 \}.\] 
As a consequence, $\varphi$ induces a quasi-immersion  
$\varphi : (Y,A) \to (X,B)$, and also a quasi-immersion 
\begin{equation}
\label{im}
(Y,A\cap \varphi^{-1} (S) ) \to (X,B \cap S).
\end{equation}

But 
\[\varphi^{-1}(S) = \bigcap_{i=1}^n \{y \in Y \ \big| \ |f_i(y)| \Diamond_i \lambda_i |g_i(y)| \}. \]
Here we  have written  $f_i$ (resp. $g_i$) whereas we should rather have written 
$\varphi^*(f_i)$ (resp. $\varphi^*(g_i)$).
Hence by induction hypothesis, the groups
$H^q_c((Y,\varphi^{-1}(S)) , \Lambda )$ are finite.\par
Now $A\cap \varphi^{-1}(S)$ is an open subset of
$\varphi^{-1}(S)$, whose complement  
in $\varphi^{-1}(S)$  is 
$\varphi^{-1}(S)\cap \{y\in Y \ \big| \ g(y) =0\}$.
Let  $Z$ be the Zariski closed subset of $Y$ defined by 
\[Z=\{y \in Y \ \big| \ g(y)=0 \}\] 
and
$\psi : Z \to Y $ the associated closed immersion. 
We then obtain a quasi-immersion:
\[(Z,\psi^{-1}( \varphi^{-1} (S) ) ) \to (Y, Z \cap \varphi^{-1}(S)).\]
By the induction hypothesis the groups
$H_c^q( \psi^{-1} ( \varphi^{-1} (S ) ) , \Lambda ) $ are finite, 
therefore it is also true for the groups 
$H^q_c (  Z \cap \varphi^{-1}(S), \Lambda ) $.
Thus in the long exact sequence 
\[\cdots \to  H^q_c(A\cap \varphi^{-1}(S) , \Lambda ) \to H^q_c (\varphi^{-1}(S) , \Lambda) \to H^q_c(Z \cap \varphi^{-1}(S) , \Lambda ) \to \cdots \]
the written groups in the middle and in the right are finite and we conclude from this that the groups
$H^q_c (A\cap\varphi^{-1}(S),\Lambda)$ are finite.
We have already noticed that $(Y,A\cap \varphi^{-1}(S))\to(X,B \cap S)$ is a 
quasi-immersion, hence 
\[H^q_c(A\cap\varphi^{-1}(S),\Lambda) \simeq H^q_c(B \cap S,\Lambda).\]
From this, we conclude that the groups  $H^q_c(B \cap S,\Lambda)$ are also finite.\par
If we go back to our starting point 
\[B\cap S= \{x\in S \ \big| \ |f(x)| \leq \lambda |g(x)| \ \text{and} \ g(x)\neq 0 \}  \]
is an open subset of 
\[R =  \{x\in S \ \big| \ |f(x)| \leq \lambda |g(x)| \}.\]
The complementary subset  of $B\cap S$ in  $R$ is  
\[D= \{x\in S \ \big| \ |f(x)|\leq \lambda |g(x)| \ \text{and} \ g(x) =0 \} 
= \{x\in S \ \big| \ f(x)=g(x) =0 \}.\]
We denote by  $Z'$ the Zariski closed subset of
$X$ : 
\[ Z' = \{x \in X \ \big| \  f(x)=g(x)=0 \}\] 
hence 
$D = Z' \cap S$, and using the same kind of arguments as above we can 
conclude that the groups 
$H_c^q(D,\Lambda)$ are finite.\par
We use for the last time the long exact sequence 
\[\cdots \to H^q_c(B\cap S , \Lambda) \to H^q_c(R , \Lambda) \to H^q_c(D,\Lambda ) \to \cdots \]
We have shown that the groups on the left, and on the right are finite, 
thus the groups
$H^q_c(R,\Lambda)$ are also finite.
\end{proof}

Next, we want to extend this result to an arbitrary locally closed 
semianalytic subset of $X$. In order to do so, we introduce the following notation. \par
Let  $f_1, \ldots f_r , g_1, \ldots g_r \in \mathcal{A}$, and 
$\lambda_1, \ldots , \lambda_r >0$. 
For a subset $I \subseteq \{1 \ldots r\}$ we set
\[C_I = \left( \bigcap_{i\in I} \{x\in X \ \big| \ |f_i(x)| \leq \lambda_i | g_i(x) | \} \right) \cap 
\left( \bigcap_{j\notin I }\{x\in X \ \big| \ |f_j(x)| > \lambda_j |g_j(x)| \} \right). \]
The subsets $C_I$ induce a partition of $X$, and each $C_I$ is a 
semianalytic set of $X$.
If $A \subseteq \mathcal{P}(\{1 \ldots r \})$, let us set
\[C_A = \coprod_{I \in A } C_I. \]
This is a semianalytic subset of $X$, and in fact every semianalytic subset of $X$ 
is of this form\footnote{This is some kind of disjunctive normal form.}. 
This follows from the fact that if $S$ is a semianalytic subset of $X$, one can find some 
$f_1, \ldots , f_r, g_1, \ldots , g_r \in \A$ such that $S$ is a finite union 
of subsets of the form 
\[\{x\in \ \big| \  |f_{i_1}(x)| \Diamond_{i_1} |g_{i_1}(x)| \ \text{and} \ 
\cdots  \ \text{and} \
|f_{i_m}(x)| \Diamond_{i_m} |g_{i_m}(x)| \} \]
where $1 \leq i_1 < \ldots < i_m \leq r$, and 
$\Diamond_j \in \{ \leq , > \}$. \par
For instance, 
if 
$S = \{|f_1| \leq |g_1| \} \cup \{|f_2| >|g_2| \}$, 
$A = \{ \{1,2\} , \{1\} , \emptyset \}$ is suitable:
\[S= \{|f_1| \leq |g_1| \ \text{and} \ |f_2| \leq |g_2| \} \cup 
 \{|f_1| \leq |g_1| \ \text{and} \ |f_2| > |g_2| \} \cup 
 \{|f_1| > |g_1| \ \text{and} \ |f_2| > |g_2| \}. \]

\begin{lemme}
\label{lemme21}
Let  $r$ and $n$ be two integers, $f_1, \ldots f_r , g_1, \ldots g_r, F_1, \ldots F_n , G_1, \ldots G_n\in \mathcal{A}$,  
$A\subseteq \mathcal{P}( \{1 \ldots r \} )$,  
$\Diamond_i \in \{ < , \leq \}$ for $i=1 \ldots n$
 and  $\lambda_1 , \ldots , \lambda_r , \mu_1 , \ldots , \mu_n $ be some positive real numbers.
Let us suppose that the semianalytic set of $X$
\[C= C_A \cap \Big(\bigcap_{i=1}^n \{x\in X \ \big| \ |F_i(x)| \Diamond_i \mu_i |G_i(x)| \} \Big)\]
is locally closed.
Then the groups $H^q_c(C,\Lambda )$ are finite. 
\end{lemme}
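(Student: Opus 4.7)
My plan is to induct on the number $r$ of polynomial pairs $(f_i, g_i)$, letting $n$ vary freely in the induction hypothesis. The base case $r = 0$ forces $A \subseteq \mathcal{P}(\emptyset) = \{\emptyset\}$: either $A = \emptyset$, in which case $C = \emptyset$, or $A = \{\emptyset\}$ and $C_A = X$, in which case $C$ is of the form already treated by Lemma \ref{prop21}. Either way the cohomology is finite.

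For the inductive step, I would split according to whether the index $r$ belongs to $I$ or not. Set $A_1 = \{I \in A : r \in I\}$ and $A_2 = A \setminus A_1$, so $C_A = C_{A_1} \sqcup C_{A_2}$ and correspondingly
\[ C^{(1)} := C \cap \{|f_r| \leq \lambda_r |g_r|\}, \qquad C^{(2)} := C \cap \{|f_r| > \lambda_r |g_r|\}. \]
Since $C$ is locally closed in $X$ and $\{|f_r| \leq \lambda_r |g_r|\}$ is closed while its complement is open, $C^{(1)}$ is closed in $C$, $C^{(2)}$ is open in $C$, and both remain locally closed in $X$. The long exact sequence \eqref{sel} applied to the open-closed decomposition $C = C^{(2)} \sqcup C^{(1)}$ then reduces the finiteness of $H^q_c(C, \Lambda)$ to that of $H^q_c(C^{(i)}, \Lambda)$ for $i = 1, 2$.

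The key observation is that each $C^{(i)}$ falls under the hypothesis of the lemma but with only $r - 1$ polynomial pairs, because the constraint on $(f_r, g_r)$ can be absorbed into the ``second part'' as one extra strict or non-strict inequality. For $C^{(1)}$, take $A'_1 = \{I \setminus \{r\} : I \in A_1\} \subseteq \mathcal{P}(\{1, \ldots, r-1\})$ and append the non-strict inequality $\{|f_r| \leq \lambda_r |g_r|\}$; for $C^{(2)}$, take $A'_2 = A_2 \subseteq \mathcal{P}(\{1, \ldots, r-1\})$ and append the strict inequality $\{|g_r| < \lambda_r^{-1} |f_r|\}$, which is equivalent to $|f_r| > \lambda_r |g_r|$. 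Since local closedness of both pieces was already verified, the induction hypothesis (with $r - 1$ pairs and $n + 1$ inequalities) applies and yields the claim. I do not expect a serious obstacle: the splitting is manifestly along a locally closed partition, the long exact sequence \eqref{sel} is precisely the available tool for such a decomposition, and the bookkeeping that puts $C^{(1)}$ and $C^{(2)}$ back into the lemma's form is immediate.
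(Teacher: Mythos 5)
Your proof is correct and follows essentially the same route as the paper: induct on $r$, split $C$ along the closed condition $\{|f_r|\leq\lambda_r|g_r|\}$ into a closed piece and an open piece, absorb that condition into the list of $\Diamond$-inequalities (dropping to $r-1$ pairs and $n+1$ inequalities), and conclude with the long exact sequence \eqref{sel}. Your explicit rewriting of the strict inequality as $|g_r|<\lambda_r^{-1}|f_r|$, and your handling of the $r=0$ base case, are slightly more careful than the paper's but change nothing of substance.
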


\begin{proof}
We prove this  by induction on $r$. \par
If $r=0$ this is precisely the preceding lemma \ref{lemme1}. \par
Let $r\geq 0$ and let us assume that  
we are given $f_1, \ldots f_{r+1}, g_1 , \ldots g_{r+1} \in \mathcal{A}$, 
$A\subset \mathcal{P} (\{1 \ldots r+1 \})$, and 
\[ C = C_A \cap \left( \bigcap_{i=1}^n \{x\in X \ \big| \ |F_i(x)| \Diamond_i \mu_i |G_i(x)| \} \right)\] 
a subset of $X$, assumed to be locally closed. 
Then we must show that the groups $H^q_c(C,\Lambda )$ are finite.
The idea is to decompose  $C$ as 
\[ C = \big\{ x\in C \st |f_{r+1}(x)| \leq \lambda_{r+1}|g_{r+1}(x)| \big\} 
\coprod  \big\{ x\in C \st |f_{r+1}(x)| > \lambda_{r+1}|g_{r+1}(x)| \big\} \] 
and to use our induction hypothesis to this partition of $C$. \par
To formalize this, we set
\[A_1 = \{P \in A \ \big| \ r+1 \in P \} \]
\[A_2= \{P \in A \ \big| \ r+1 \notin P \} = A \setminus A_1.\]
Finally we set
\[B_1 = \{P\setminus \{r+1 \} \ \big| \ P\in A_1 \} \]
and  $B_2 = A_2$. In addition, we see $B_1$ and $B_2$ as subsets of  
$\mathcal{P} ( \{1 \ldots r \} ) $.\par
We now consider the subsets of $X$, 
$C_{B_1}$  and $C_{B_2}$, associated with 
$f_1,\ldots f_r, g_1, \ldots g_r$ and $\lambda_1, \ldots , \lambda_r$ .
Then, by definition of $B_1$ and $B_2$, 
\[C_A =  ( \{x\in X \ \big| \ |f_{r+1} (x) | \leq \lambda_{r+1} | g_{r+1}(x)| \} \cap C_{B_1} )  \coprod
( \{x\in X \ \big| \ |f_{r+1} (x) | > \lambda_{r+1} | g_{r+1}(x)| \} \cap C_{B_2} ). \]
Said more simply, we have partitioned the set
$ C_A = \coprod_{I \in A } C_I$ in two parts: on the left side, we have kept the $C_I's$ where 
the inequality $|f_{r+1}(x)|\leq\lambda_{r+1}|g_{r+1}(x)|$ appears, and on 
the right side, we have kept the $C_I's$ where the inequality  
$|f_{r+1}(x)|>\lambda_{r+1}|g_{r+1}(x)|$ appears, which allows 
us to 
restrict to subsets of $\{1\ldots r \}$. 
And now we set: 
\begin{align*}
C_1 = C_{B_1} \cap \left( \{x\in X \ \big| \ |f_{r+1}(x) | \leq \lambda_{r+1} |g_{r+1} (x) |  \} \cap   
\bigcap_{i=1}^n  \{x\in X \ \big| \ |F_i(x)| \Diamond_i \mu_i |G_i(x)| \} \right) \  \\ 
 C_2=C_{B_2} \cap \left( \{x\in X \ \big| \ |f_{r+1}(x) | >\lambda_{r+1} |g_{r+1} (x)| 
\} \cap   
\bigcap_{i=1}^n  \{x\in X \ \big| \ |F_i(x)| \Diamond_i \mu_i |G_i(x)| \} \right).
 \end{align*} 
The following holds : 
\[ C_1 =\big\{ x\in C \st |f_{r+1}(x)| \leq \lambda_{r+1}|g_{r+1}(x)| \big\}  \]
\[C_2 =\big\{ x\in C \st |f_{r+1}(x)| > \lambda_{r+1}|g_{r+1}(x)| \big\} . \]
So $C= C_1 \coprod C_2$, $C_2$ is an open subset of $C$, and $C_1$ is 
the closed complementary subset attached to it, in particular, $C_1$ and $C_2$ are locally closed in $X$.
But we can now apply our induction hypothesis to  $C_1$ and $C_2$: 
the groups $H^q_c(C_i,\Lambda)$ are finite for $i=1,2$.  
Finally, according to long exact sequence \eqref{sel} applied to $C_2 \subset C \supset C_1$,  the groups $H^q_c(C,\Lambda ) $ are finite. 
\end{proof} 

The previous lemma, with $n=0$ becomes:
\begin{prop}
\label{finitude}
Let  $S$ be a  locally closed semianalytic subset of $X$.
The groups   
$H^q_c(S,\Lambda)$ are finite.
\end{prop}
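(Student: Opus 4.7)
The plan is to reduce the statement immediately to Lemma \ref{lemme21} with $n=0$. The key observation, already made in the paragraph preceding that lemma, is that every semi-algebraic subset of $X$ can be put in the normal form $C_A$ for some choice of polynomials $f_1, \ldots, f_r, g_1, \ldots, g_r$, positive real numbers $\lambda_1, \ldots, \lambda_r$, and subset $A \subseteq \mathcal{P}(\{1, \ldots, r\})$. Indeed, after fixing the polynomials and constants appearing in a boolean expression for $S$, the family $\{C_I\}_{I \subseteq \{1, \ldots, r\}}$ forms a finite partition of $X$ into the cells of the sign configuration of the inequalities $|f_i| \leq \lambda_i |g_i|$ versus $|f_i| > \lambda_i |g_i|$, and $S$ is a union of such cells.

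First I would apply this normal form to our given locally closed semi-algebraic $S$ and write $S = C_A$. Then the hypotheses of Lemma \ref{lemme21} are satisfied in the case $n=0$: the auxiliary family of constraints $\{|F_i| \Diamond_i \mu_i |G_i|\}$ is empty, $C = C_A = S$, and $C$ is locally closed by assumption on $S$. Lemma \ref{lemme21} then yields directly that $H^q_c(S, \Lambda)$ is finite.

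There is essentially no obstacle in this reduction, since all of the real content has been absorbed into the inductive argument of Lemma \ref{lemme21}, which in turn rested on Lemma \ref{prop21}. The reason the proof has been organized this way is that a naive induction on the number of basic inequalities defining $S$ does not preserve local closedness of the intermediate sets one would encounter; the parameter $n$ together with the combinatorial bookkeeping via $A$ was introduced precisely so that the inductive hypothesis applies both to $C_1$ and $C_2$ (each still of the form $C_{A'} \cap \bigcap \{|F_i| \Diamond_i \mu_i |G_i|\}$ and locally closed) at each step. For the final statement one simply specializes to $n=0$.
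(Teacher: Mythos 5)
Your proposal is correct and follows exactly the route the paper intends: express $S$ in the normal form $C_A$ via the partition $\{C_I\}$ described before Lemma \ref{lemme21}, and then invoke that lemma with $n=0$, the local closedness of $C=C_A=S$ being the hypothesis required there. Your closing remark about why the extra parameters $n$ and $A$ are needed to make the induction close up is also an accurate reading of the structure of the argument.
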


\section{Global results}
In this section, we will still assume that $k$ is algebraically closed.
\subsection{The compact case}

\begin{prop}
\label{propber}
Let $X$ be a compact $k$-analytic space, and let $S$ be a locally closed 
G-semianalytic subset of $X$. 
The groups 
$H^q_{c}(S , \Lambda)$ are finite.
\end{prop}

\begin{proof}
We prove by induction on $m$ that if $X$ is a Hausdorff $k$-analytic space which is covered by 
$m$ affinoid domains: $X = \cup_{i=1}^m V_i$, and $S \subset X$ is a locally closed 
subset of $X$ such that 
$S \cap V_i$ is semianalytic in $V_i$ for all $i$ (in particular $S$ is 
$G$-semianalytic in $X$), then the groups
$H^q_{c} (S,\Lambda)$ are finite. \par
For $m=1$ this is proposition \ref{finitude}. \par
Let then $m\geq 1$ and let us assume that $X$ is covered by the affinoid domains 
$V_i, ~i=1\ldots m+1$, and that $S\subset X$ such that 
for all $i$, $S\cap V_i$ is semianalytic in $V_i$. We set 
\begin{align}
 \notag R &= S \cap ( V_1 \cup \ldots \cup V_m) \\
 \notag T &= S \setminus R \\
 \label{T2} & = (V_{m+1} \cap S) \setminus (V_{m+1} \cap(V_1 \cup \ldots \cup V_m)).
\end{align}
If $X':=V_1 \cup \ldots V_m$, then $X'$ is a compact analytic domain of $X$ (not necessarily 
good), and $R$ is a locally closed $G$-semianalytic subset of $X'$ such that 
$R \cap V_i$ is semianalytic in $V_i$ for $i=1 \ldots m$. 
Thus by induction hypothesis, the groups 
$H^q_c( (X',R), \Lambda)$ are finite. In addition, since 
$(X',R) \to (X,R)$ is a quasi-immersion (because $X'$ is an analytic domain of $X$), 
$H^q_c( (X',R), \Lambda) \simeq H^q_c( (X,R), \Lambda)$, hence these are finite groups.\par

Next, we claim that $T$ is a locally closed semianalytic set of $V_{m+1}$. 
Indeed, for each $i$, 
$V_{m+1} \cap V_i$ is an affinoid domain of $V_{m+1}$ 
(because $X$ is separated), hence closed and  semianalytic in 
$V_{m+1}$ according to the Gerritzen-Grauert theorem. Hence 
$V_{m+1} \cap ( V_1 \cup \ldots V_n)$ is a closed semianalytic set of $V_{m+1}$. 
Since $S \cap V_{m+1}$ is a locally closed semianalytic subset of $V_{m+1}$, according to 
\eqref{T2},  
$T$ is a locally closed semianalytic subset of $V_{m+1}$.
Hence according to proposition \ref{finitude}, the groups $H^q_c( (V_{m+1},T), \Lambda)$ are finite, 
and since 
$(V_{m+1},T) \to (X,T)$ is a quasi-immersion, 
$H^q_c( (V_{m+1},T), \Lambda) \simeq H^q_c( (X,T), \Lambda)$, thus 
the groups $H^q_c( (X,T), \Lambda)$ are finite.\par
Finally, since $R$ is a closed subset of $S$ and $T=S \setminus R$, 
the long exact sequence \eqref{sel}
allows to conclude that the groups $H^q_c( (X,S),\Lambda)$ are finite.
\end{proof}

\subsection{The semi-algebraic case}

\begin{prop}
\label{finisemialg}
Let $\mathcal{A}$ be a $k$-affinoid algebra, $\mathcal{X}$ a separated $\mathcal{A}$-scheme of finite type, and $S$ a locally closed semi-algebraic subset of 
$\mathcal{X}^{an}$. Then the groups 
$H^q_{c}((\X^{an},S) , \Lambda)$ (that we abusively denote by 
$H^q_c(S,\Lambda)$) are finite.
\end{prop}

\begin{proof}
According to Nagata's compactification theorem (see \cite{ConNag} for a modern proof), 
we can embed $\X$ as an open subscheme of 
a proper $\A$-scheme $\overline{\X}$. Since 
$(\X^{an},\X^{an}) \to (\overline{\X}^{an}, \X^{an})$ is a quasi-immersion, and since 
quasi-immersions are stable under base change, for all $q\geq 0$ we have an 
isomorphism of groups:
\[H^q_c( (\X^{an},S), \Lambda) \simeq  H^q_c( (\overline{\X}^{an},S), \Lambda).\]
In addition, according to proposition \ref{propstabsa}, $S$ is still 
semi-algebraic in $\overline{\X}^{an}$. Moreover, 
$S$ is still locally closed in $\overline{\X}^{an}$ because 
$\X^{an}$ is open in $\overline{\X}^{an}$. So we can assume that $\X$ is proper. \par 
In that case, $\X^{an}$ is compact, 
and according to lemma \ref{lemmesemalgaffi}, $S$ is $G$-semianalytic in $\overline{\X}^{an}$, and 
the result follows from proposition \ref{propber}.

\end{proof}

\section{From torsion to $\ell$-adic coefficients}

\subsection{Continuous Galois action}
\label{Galois}
From now on, we do not assume any more that $k$ 
is algebraically closed. We still consider 
$X$ a Hausdorff $k$-analytic space.
Let  $S$ be a  locally closed subset of  $X$ 
and let us set  $\overline{X} =X\widehat{\otimes}_k \widehat{k^a}$, 
$\pi:  \overline{X} \to X$, the projection, and  $\overline{S} = \pi^{-1} (S)$. 
This is a locally closed subset of $\overline{X}$.
There is an action of $\Gal$ on $\overline{X}$ which stabilizes $\overline{S}$.
Hence $\Gal$ acts on the $k$-germ  $(\overline{X} , \overline{S} ) $. 
If $\mathcal{F} \in \boldsymbol{S}(X,S)$, we set 
$\overline{\mathcal{F}} = \pi^*(\mathcal{F})$. 
The action of $\Gal$ on $(\overline{X} , \overline{S})$ induces an action on 
 $H_c^i( \overline{S} , \overline{\mathcal{F}} )$. 
Indeed for $\sigma \in \Gal$ we have the commutative diagram 
\[\xymatrix{
(\overline{X},\overline{S} ) \ar[rd]^{\pi}  \ar[rr]^{\sigma} & &(\overline{X},\overline{S} ) \ar[ld]^{\pi}  \\
& (X,S) & \\ 
} \]
Then the action of $\sigma$ on the cohomology is given by :
\[\sigma^* : \ H^i_c( (\overline{X},\overline{S}) , \overline{\mathcal{F}} ) \simeq 
H^i_c( (\overline{X},\overline{S}) , \sigma^* \overline{\mathcal{F}} )  \simeq 
H^i_c( (\overline{X},\overline{S}) , \overline{\mathcal{F}} ), \]
the last isomorphism being a consequence of the isomorphism 
$\sigma^* \circ \pi^ * ( \mathcal{F} ) \simeq \pi^ * ( \mathcal{F} )$.
If $(X,S)$ is a $k$-germ, and $K$ is a complete extension of  $k$, 
we consider  
$\pi_K: X_K =  X\hat{\otimes}_k K \to X $ and we set $S_K = \pi_K^{-1} (S)$, so that 
we can  consider the $K$-germ  $(X_K, S_K)$.

\begin{prop}
If $X$ is a Hausdorff $k$-analytic space, $F$ a locally closed subset of  $X$, 
$\mathcal{F} \in \boldsymbol{S}_{\text{\'et}}(X)$, there is an isomorphism of Galois modules:
$$\varinjlim_{K / k} H^q_c( (X_K , F_K) , \mathcal{F}_K ) \simeq 
H^q_c( (\overline{X} , \overline{F} ) , \overline{ \mathcal{F} } ) $$
where the limit is taken over all finite separable extensions $K$ of $k$ contained in $k^{sep}$.
\end{prop}
\begin{proof}
We will use that if $Y$ is a Hausdorff $k$-analytic space and $\mathcal{G} \in \boldsymbol{S}_{\text{\'et}}(Y)$,  
the following is true  \cite[5.3.5]{Berko93}:
\[\varinjlim_{K /k} H^q_c(Y_K , \mathcal{G}_K ) \simeq H^q_c (\overline{Y}  , \overline{ \mathcal{G} } ), \]
and it is an isomorphism of Galois-modules.\par
Since $F$ is locally closed, it can be written 
$F = U \cap F'$ where $U$ is open in $X$ and $F'$ is closed in $X$, 
and since $(U,F) \to (X,F)$ is a quasi-immersion, for all $q\geq 0$, 
$H^q_c( (U,F),\Lambda ) \simeq H^q_c( (X,F) ,\Lambda )$. 
Now,  
$F$ is closed in $U$, so we can replace $X$ by $U$ and assume that $F$ is closed. \par 
In this situation, let $U = X \setminus F$ be the complementary open subset of $X$.
For $K$ a finite separable extension of $k$, $F_K$ is a closed subset of $X_K$ whose complementary open subset is  $U_K$.
Hence we get a commutative diagram:
\[\xymatrix{
\ar@{.>}[r] & \varinjlim_{K /k} H^q_c(U_K,\mathcal{F}_K) \ar@{=}[d] \ar[r] &
\varinjlim_{K / k } H^q_c(X_K , \mathcal{F}_K) \ar@{=}[d] \ar[r] & 
\varinjlim_{K / k } H^q_c( (X_K,F_K) , \mathcal{F}_K) \ar[d] \ar@{.>}[r]   & \\
\ar@{.>}[r] &  H^q_c(\overline{U},\overline{\mathcal{F}})  \ar[r] &
 H^q_c(\overline{X}  , \overline{\mathcal{F}})  \ar[r] & 
 H^q_c((\overline{X}, \overline{F} ) , \overline{\mathcal{F}})  \ar@{.>}[r]   & 
}\]
Thanks to the long exact sequence \eqref{sel1}, 
the first row is exact because $\varinjlim$ is an exact functor (we consider a filtered inductive limit), and the second row is exact.   
We can then conclude thanks to the five lemma.\end{proof}
In particular, if $\A$ is a $k$-affinoid algebra, $\X$ is a separated  
$\A$-scheme of finite type,  
and if $S$ is a locally closed subset of $\mathcal{X}^{an}$,
\[H^q_c((\overline{\X},\overline{S}), \Lambda ) \]
is a continuous Galois module. Moreover, if $T$ is an open subset of $S$ 
and $R=S\setminus T$, the long exact sequence
\[ \cdots \to H^q_c((\overline{\X} , \overline{T}),\Lambda) \to 
H^q_c( (\overline{\X} , \overline{S}),\Lambda) \to 
H^q_c(  ( \overline{\X} , \overline{R}),\Lambda) \to  \cdots \]
is Galois equivariant.

\subsection{About the dimension}
Let $X$ be a Hausdorff $k$-analytic space. 
We denote by $d$ the dimension of $X$ (cf. \cite[p.~34]{Berko90} and \cite[p.~23]{Berko93}).
\begin{prop}
\cite[Cor 5.3.8]{Berko93}. 
Let $Y$ be a Hausdorff $k$-analytic space of  dimension $d$, $\mathcal{F}$ 
a torsion abelian sheaf on
$Y$, then for all  $i>2d$,   
$H_c^i(Y,\mathcal{F}) =0$.
\end{prop}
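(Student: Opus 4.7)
The plan is to reduce the statement to Berkovich's vanishing theorem on affinoid spaces, where the bound $2d$ arises.

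First, I would reduce to the case of a constant sheaf with finite coefficients. Writing $\mathcal{F}$ as the filtered colimit of its subsheaves $\mathcal{F}[n]$ of $n$-torsion sections, and then approximating each $\mathcal{F}[n]$ by its constructible subsheaves, one reduces to $\mathcal{F} = \underline{\Lambda}$ with $\Lambda$ a finite abelian group. This step uses the commutation of $H^i_c$ with filtered colimits of sheaves, which holds because $C_Y$ is a paracompactifying family on the locally compact Hausdorff space $Y$ (as recalled in section 2 of this paper).

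Second, I would reduce to the compact case. Exhausting $Y$ by an increasing directed system of relatively compact open analytic subdomains $Y_\alpha$, the formalism of paracompactifying supports gives $H^i_c(Y, \underline{\Lambda}) = \varinjlim_\alpha H^i_c(Y_\alpha, \underline{\Lambda})$, so it suffices to bound each $H^i_c(Y_\alpha, \underline{\Lambda})$. Being relatively compact, $Y_\alpha$ can be covered by finitely many affinoid analytic domains $U_1, \ldots, U_n$ of $Y$, each of dimension at most $d$. I would then induct on $n$, as in the proof of Lemma \ref{prop31}, by feeding the decomposition of $Y_\alpha$ into the closed piece $U_1 \cup \cdots \cup U_{n-1}$ and the locally closed piece $U_n \setminus (U_1 \cup \cdots \cup U_{n-1})$ (still of dimension $\leq d$) into the long exact sequence \eqref{sel}.

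The base case and main obstacle is then the affinoid vanishing: for an affinoid $U$ of dimension $d$, one must show $H^i(U, \underline{\Lambda}) = 0$ for $i > 2d$ (compactly supported cohomology and ordinary cohomology coincide on the compact space $U$). This is Berkovich's fundamental vanishing theorem, whose proof combines the comparison between analytic and algebraic étale cohomology of a suitable algebraic model of $U$ with Artin's classical bound for the étale cohomological dimension of affine schemes of finite type over a separably closed field. Everything above is carried out in detail in \cite[\S 5]{Berko93}, which is why the paper simply quotes the result as \cite[Cor 5.3.8]{Berko93}.
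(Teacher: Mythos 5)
The paper does not actually prove this proposition: it is imported verbatim as \cite[Cor 5.3.8]{Berko93} and then generalized to germs in the next proposition, so there is no internal argument to compare your sketch against — your final sentence correctly identifies this. Taken as a reconstruction of Berkovich's proof, your outline has the right overall shape (dévissage to finite coefficients, reduction to a compact/finitely covered situation, a fundamental vanishing input), and steps such as commutation of $H^i_c$ with filtered colimits for a paracompactifying family, and the induction on a finite affinoid cover via the long exact sequence \eqref{sel}, are sound.

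Two steps are looser than they appear, though. First, the reduction ``constructible subsheaves, hence $\mathcal{F}=\underline{\Lambda}$'' does not work as stated: a constructible sheaf is only \emph{locally} constant along a stratification, not a successive extension of constant sheaves on $Y$, so the standard dévissage has to go through sheaves of the form $\varphi_!\underline{\Lambda}$ for $\varphi\colon U\to Y$ \'etale (harmless for the bound, since $\dim U=\dim Y$), or else one avoids the reduction entirely. Second, and more substantively, the affinoid input is not obtained in \cite{Berko93} by comparing with ``an algebraic model'' and invoking Artin's affine vanishing theorem; an affinoid has no such model in the naive sense. Berkovich's mechanism is the Leray spectral sequence for the morphism from the \'etale site to the underlying topological space $|Y|$: the stalks of the higher direct images are Galois cohomology of the fields $\mathcal{H}(x)$, whose $\ell$-cohomological dimension is bounded by $s(x)+t(x)\leq d$ when $k$ is algebraically closed, while $H^i_c(|Y|,-)$ vanishes for $i$ above the topological dimension of $|Y|$, which is at most $d$; adding the two contributions gives the bound $2d$. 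Since the result is used here as a black box, these inaccuracies do not affect the paper, but your sketch should not be mistaken for the actual proof of \cite[Cor 5.3.8]{Berko93}.
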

\noindent We can generalize this result in the following way:
\begin{prop}  
\label{prop_dim}
Let $X$ be a Hausdorff $k$-analytic space 
of dimension $d$. 
Let  $S$ be a locally closed subset of $X$.  For $q>2d$, 
and  $F \in \boldsymbol{S}(X)$ an abelian torsion sheaf on $X$, 
$H^q_c((X,S), F ) =0$.
\end{prop}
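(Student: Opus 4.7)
The plan is to reduce to the case where $S$ is open in $X$, where the statement will follow from the preceding proposition applied to $S$ viewed as a Hausdorff Berkovich space of dimension at most $d$ in its own right.

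First I handle the open case. If $S$ is open in $X$, then $S$ is an analytic domain, and the quasi-immersion $(S,S) \to (X,S)$ recalled in Section~2 identifies $H^q_c((X,S),F)$ with $H^q_c(S, F|_S)$. Since $F|_S$ is still a torsion abelian sheaf and $S$ is Hausdorff of dimension at most $d$, the preceding proposition gives the vanishing for $q > 2d$.

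For a general locally closed $S$, let $\overline{S}$ denote its closure in $X$. Because $S$ is locally closed, $\overline{S} \setminus S$ is closed in $\overline{S}$ and hence in $X$, so $U := X \setminus (\overline{S} \setminus S)$ is open in $X$. One checks directly that $U \cap \overline{S} = S$, so $S$ is closed in $U$ with complement $U \setminus S = X \setminus \overline{S}$, itself open in $X$. Applying \eqref{sel} to $U$ (locally closed in $X$) with open part $T := X \setminus \overline{S}$ and closed complement $R := S$ yields
\[ \cdots \to H^q_c((X, X \setminus \overline{S}), F) \to H^q_c((X, U), F) \to H^q_c((X, S), F) \to H^{q+1}_c((X, X \setminus \overline{S}), F) \to \cdots \]
For $q > 2d$, both $X \setminus \overline{S}$ and $U$ are open subspaces of $X$ of dimension at most $d$, so the open case kills all three outer terms (using $q+1 > 2d$ as well). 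Exactness then forces $H^q_c((X,S),F) = 0$.

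There is no real obstacle beyond Berkovich's vanishing, which is given as the preceding proposition; the remainder is a clean formal squeeze. The one item of bookkeeping is that \eqref{sel}, stated in Section~2 for the constant sheaf $\underline{\Lambda}$, applies verbatim to an arbitrary torsion sheaf $F \in \boldsymbol{S}(X)$, since it is derived from Berkovich's general localization exact sequence which is functorial in the sheaf argument.
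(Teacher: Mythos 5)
Your proof is correct and follows essentially the same route as the paper: reduce to the open case via the excision long exact sequence \eqref{sel}, using that the two outer germs in the sequence are quasi-immersed open subspaces to which Berkovich's vanishing theorem applies directly. The only cosmetic difference is that the paper writes $S = U \cap F$ with $U$ open and runs the sequence inside $U$ with $V = U \setminus S$, whereas you construct the specific open set $U = X \setminus (\overline{S} \setminus S)$ and run the sequence over $X$; the two are interchangeable.
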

\begin{proof} 
Write $S=U\cap Z$ with $U$ an open subset of $X$ and  $Z$ a closed subset.
Set $V = U \setminus S$ which is an open subset of $U$ and $X$.
Then 
$H^q_c ((U,S),F) \simeq H^q_c ((X,S),F)$ and 
$H^q_c(V,F) \simeq H^q_c((V,V),F) \simeq  H^q_c((U,V),F)$, hence in the long exact sequence \eqref{sel} 
\[\cdots \to H^q_c( (U,V) , F) \to H^q_c(U,F) \to H^q_c((U,S) , F) \to \cdots  \]
according to the previous proposition, the groups are 
$0$ on the left and in the middle for 
$q>2d$, so this must also occur for the groups on the right.
\end{proof}
In our situation, this result can be refined. 
If $\A$ is a $k$-affinoid algebra, $\X$ a separated $\A$-scheme of finite type, and 
$S\subset \X^{an}$ a semi-algebraic set, we set 
$Z := \overline{S}^{Zar}$. 
Then 
since $(Z,S) \to (X,S)$ is a quasi-immersion, 
$H^q_c((Z,S), F) \simeq H^q_c((X,S),F)$. Hence if 
we set 
$\dim(S):= \dim(Z)$, with the above notations, $H^q_c((X,S), F ) =0$
 for all $q>2\dim(S)$.

\subsection{Finiteness of the $\ell$-adic cohomology}
In this subsection, we assume again that $k$ is algebraically closed. We fix $\A$ a $k$-affinoid 
algebra, $\X$ a separated $\A$-scheme of finite type, $S$ a locally closed 
semi-algebraic subset of $\X^{an}$, and $\ell$ a prime number different from the 
characteristic of $\tilde{k}$.\par  
In this situation, we have seen in proposition 
\ref{finisemialg} that for $n\geq 0$, the groups 
$H^q_c(S,\Z{\ell^n})$ are finite (we remind that the notation 
$H^q_c(S,\Z{\ell^n})$ is a shorthand for $H^q_c( (X,S),\underline{\Z{\ell^n}})$). \par 

We then set
\begin{equation}
\label{cohzl}
H^q_c(S, \mathbb{Z}_\ell ) =\varprojlim_{n>0} H^q_c(S, \mathbb{Z} / \ell^n \mathbb{Z} )   
\end{equation}
and 
\begin{equation}
\label{cohql}
H^q_c(S, \mathbb{Q}_\ell ) =H^q_c(S, \mathbb{Z}_\ell ) \otimes_{\mathbb{Z}_\ell} \mathbb{Q}_\ell . 
\end{equation}
It is a classical fact that proposition \ref{finisemialg} implies that the groups  
$H^q_c(S, \mathbb{Z}_\ell )$ are finitely generated $\mathbb{Z}_\ell$-modules, and as a 
consequence, that $H^q_c(S, \mathbb{Q}_\ell )$ are finite-dimensional $\mathbb{Q}_\ell$-vector spaces. For completeness, we give here a proof as simple as possible. 
\begin{defi}
A $\mathbb{Z}_\ell$-module $M$ is called complete and separated 
(with respect to the $\ell$-adic topology) if the canonical map 
\[\pi : M \to \widehat{M} := \varprojlim_{k\geq 1} M / \ell^kM \]
is an isomorphism.
\end{defi}

%The following easy fact is proven for instance in \cite{Jann}[4.4]

\begin{prop} 
\label{propJ}
Let us consider a projective system of abelian groups
\[M_1 \xleftarrow[]{d_1} M_2 \leftarrow \cdots \xleftarrow[]{d_{n-1}} M_n \xleftarrow[]{d_n}  \cdots \]
where each $M_n$ is a finite $\Z{\ell^n}$-module. Then 
\[M := \varprojlim_{n\geq 1} M_n \]
is a complete and separated $\Zl$-module.
\end{prop}

\begin{proof}
We must show that 
\[\pi : M \to \widehat{M} = \varprojlim_{k\geq 1} M / \ell^kM \]
is an isomorphism.\par
We first prove that $\pi$ is injective. 
If $x = (x_n) \in M$, and 
$\pi(x) =0$, this means that $x \in \ell^kM$ for all $k$. 
Since for each $n$, 
$M_n$ is a $\Z{\ell^n}$-module, taking $k=n$, this implies that $x_n \in \ell^nM_n$, so $x_n=0$ for all $n$, hence $x=0$. \par
Let us now prove that $\pi$ is surjective. 
For this, we consider a Cauchy sequence $(y^{(k)})_{k\geq 1}$ in $M$, such that 
for all $j\geq k$, $y^{(j)} \equiv y^{(k)} \mod \ell^kM$. 
In particular, if $j\geq k$, this implies that for all $n$,
\begin{equation}
\label{congruence}
 y^{(j)}_n \equiv y^{(k)}_n \mod \ell^kM_n.
 \end{equation}
Now all we have to do is to find 
some $x\in M$ such that for all $k\geq 1$, 
$x \equiv y^{(k)} \mod \ell^kM $.\par 
First, we define $x = (x_n)$ by 
\[ x_n = (y_n^{(n)})_{n\geq 1} .\]
Thus we obtain: 
\[d_n(x_{n+1}) = d_n(y^{(n+1)}_{n+1}) = y^{(n+1)}_n \equiv y^{(n)}_n \mod \ell^nM_n,\]
the last congruence being a consequence of \eqref{congruence}.
But since $M_n$ is a $\Z{\ell^n}$-module, $l^nM_n=\{0\}$, thus 
$d_n(x_{n+1})=  y^{(n)}_n = x_n$. Hence $(x_n) \in M$.\par
It is now sufficient to show that $x \equiv y^{(k)} \mod \ell^kM $ for all $k\geq 1$. 
For this, let us consider some $n\in \N^*$.
Then  
\[x_n -y^{(k)}_n = y^{(n)}_n -y^{(k)}_n. \]
If $n<k$, then according to \eqref{congruence}, 
$y^{(n)}_n -y^{(k)}_n \in \ell^n M_n$ and since 
$\ell^nM_n = \{0\}$, $y^{(n)}_n =y^{(k)}_n$, so in particular 
$y^{(n)}_n \equiv y^{(k)}_n \mod \ell^k M_k$. 
If $n\geq k$, still according to 
\eqref{congruence}, 
$y^{(n)}_n - y^{(k)}_n \in \ell^kM_k$. So in any case 
$(x - y^{(k)})_n \in \ell^kM_n$. But since the groups 
$M_n$ are all finite, according to the Mittag Leffler condition, 
\[\ell^k \varprojlim_{n\geq 1}  M_n \simeq  \varprojlim_{n\geq 1} \ell^kM_n.\]
Hence $x - y^{(k)} \in \ell^kM$ which concludes the proof. 
\end{proof}

\begin{lemme}
\label{propf}
Let $M$ be a complete and separated 
$\mathbb{Z}_\ell$-module. Then $M$ is finitely generated if and only if 
$M/\ell M$ is finite.
\end{lemme}
\begin{proof}
First, if $M$ is a finitely generated $\Zl$-module,  $M/\ell M$ is a finitely 
generated $\Z{\ell}$-module, hence is finite. \par
Conversely, if $M/\ell M$ is generated by some elements $m_1 , \ldots , m_N$ from $M$, 
we show by induction on $n$ that for each $n\geq 0$, 
$\ell^nM / (\ell^{n+1}M)$ is generated by 
$\ell^n(m_1,\ldots m_N)$. 
Indeed, this is true by hypothesis for $n=0$. 
Now, if $n>0$, and $x \in \ell^nM$, say 
$x=\sum\limits_{i=1}^N \ell^n x_i$, then $x=\ell \sum\limits_{i=1}^N \ell^{n-1}x_i$ and 
by induction hypothesis, there exists $y \in \ell^{n-1}(m_1 \ldots m_N)$ such that 
$ \sum\limits_{i=1}^N \ell^{n-1} x_i \equiv y  \mod \ell^nM$. 
Hence $\ell y \in \ell^n(m_1\ldots m_N)$ and 
$\ell y \equiv \sum\limits_{i=1}^N \ell^nx_i \mod \ell^{n+1}M$.\par
Hence if $x \in M$, one can inductively define a sequence 
$(x_n)_{n\geq 0}$ such that 
$x_n \in (m_1 \ldots m_N)$, $x_n=x$ mod $\ell^n M$ and 
$x_{n+1}-x_n \in \ell^n (m_1 \ldots m_n)$.
Hence in $\mathbb{Z}_\ell (m_1 \ldots m_N)$, $(x_n)$ has a limit which is $x$.

\end{proof}

\begin{prop}
\label{prop_finitu}
The groups 
$H^q_c(S , \mathbb{Z}_\ell ) $ are finitely generated $\mathbb{Z}_\ell$-modules. 
Hence,  $H^q_c(S , \mathbb{Q}_\ell )$ is a finitely generated vector space for all $q$, and 
 $H^q_c(S , \mathbb{Q}_\ell )=\{0\}$ for $q>2d$, where $d$ is the dimension of $\mathcal{X}^{an}$.
\end{prop}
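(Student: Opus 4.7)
The plan is to reduce the finiteness of $H^q_c(S,\mathbb{Z}_l)$ to the finiteness of $H^q_c(S,\mathbb{Z}/l^n\mathbb{Z})$ already obtained in Proposition~\ref{prop32}, using the standard formalism of continuous $l$-adic cohomology collected in Propositions~\ref{propJ} and \ref{propf}. By Proposition~\ref{prop32}, for each $n\geq 0$ the group $M_n := H^q_c(S,\mathbb{Z}/l^n\mathbb{Z})$ is finite, hence of finite $l$-exponent. Since $H^q_c(S,\mathbb{Z}_l)$ is defined in \eqref{cohzl} as the inverse limit of the $M_n$, Proposition~\ref{propJ} immediately ensures that $H^q_c(S,\mathbb{Z}_l)$ is $l$-complete. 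Moreover, the inverse system $(M_n)_n$ is automatically Mittag-Leffler because each term is finite.

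By Proposition~\ref{propf}, to conclude that $H^q_c(S,\mathbb{Z}_l)$ is finitely generated it now suffices to show that $H^q_c(S,\mathbb{Z}_l)/l\,H^q_c(S,\mathbb{Z}_l)$ is finite. For this I would use the short exact sequences of constant sheaves
\[
0 \to \underline{\mathbb{Z}/l^n\mathbb{Z}} \xrightarrow{\cdot l} \underline{\mathbb{Z}/l^{n+1}\mathbb{Z}} \to \underline{\mathbb{Z}/l\mathbb{Z}} \to 0,
\]
which are compatible with the transition maps of the inverse system as $n$ varies. The resulting long exact sequences in $H^\bullet_c$ form a compatible system of exact sequences of finite abelian groups, so Mittag-Leffler allows one to pass to the inverse limit without losing exactness. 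The outcome is an exact sequence
\[
H^{q-1}_c(S,\mathbb{Z}/l\mathbb{Z}) \to H^q_c(S,\mathbb{Z}_l) \xrightarrow{\cdot l} H^q_c(S,\mathbb{Z}_l) \to H^q_c(S,\mathbb{Z}/l\mathbb{Z}),
\]
from which one extracts an injection $H^q_c(S,\mathbb{Z}_l)/l\,H^q_c(S,\mathbb{Z}_l) \hookrightarrow H^q_c(S,\mathbb{Z}/l\mathbb{Z})$. The target is finite by Proposition~\ref{prop32}, which gives the required finiteness and thus, via Proposition~\ref{propf}, the finite generation of $H^q_c(S,\mathbb{Z}_l)$ over $\mathbb{Z}_l$.

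The statement for $\mathbb{Q}_l$-coefficients then follows at once from \eqref{cohql}, since a finitely generated $\mathbb{Z}_l$-module becomes a finite-dimensional $\mathbb{Q}_l$-vector space after inverting $l$. Finally, for the vanishing in degrees $q > 2d$, I would apply Proposition~\ref{prop_dim} to the Hausdorff Berkovich space $X = \overline{\mathcal{X}^{an}}$, whose dimension equals $d = \dim\mathcal{X}$, with the torsion sheaf $F = \underline{\mathbb{Z}/l^n\mathbb{Z}}$: this gives $H^q_c(S,\mathbb{Z}/l^n\mathbb{Z}) = 0$ for every $n$ and every $q > 2d$, hence $H^q_c(S,\mathbb{Z}_l) = \varprojlim_n 0 = 0$ and consequently $H^q_c(S,\mathbb{Q}_l) = 0$ in that range.

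The main obstacle is the preservation of exactness in the passage to the inverse limit in the second paragraph; it is crucial here that each term in the long exact sequences is finite, so that the $\varprojlim^1$ terms vanish and the resulting four-term sequence involving $H^q_c(S,\mathbb{Z}_l)$ and the Bockstein map remains exact. Once this point is secured, everything else is a direct application of the lemmas recalled just before the statement.
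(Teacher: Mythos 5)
Your proposal is correct and follows essentially the same route as the paper: the same Bockstein-type short exact sequences of coefficient groups, passage to the inverse limit via Mittag--Leffler (using finiteness of each $H^q_c(S,\mathbb{Z}/l^n\mathbb{Z})$ from Proposition~\ref{prop32}), the resulting injection $H^q_c(S,\mathbb{Z}_l)/l\,H^q_c(S,\mathbb{Z}_l)\hookrightarrow H^q_c(S,\mathbb{Z}/l\mathbb{Z})$, and the appeal to Propositions~\ref{propJ}, \ref{propf} and \ref{prop_dim}. No gaps.
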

\begin{proof}
According to proposition \ref{finisemialg} and \ref{propJ}, 
$H^q_c(S , \mathbb{Z}_\ell ) $ is a 
complete $\mathbb{Z}_\ell$-module. 
So according to lemma \ref{propf}, it only remains to prove that 
$H^q_c(S , \mathbb{Z}_\ell ) / \ell H^q_c(S , \mathbb{Z}_\ell )$ is finite. Let us prove this.\par 

For each $n\geq 0$ we have the exact sequence of groups 
\[0 \to \Z{\ell^n} \xrightarrow[]{\mu_n} \Z{\ell^{n+1} } \xrightarrow[]{\pi} \Z{\ell} \to 0  \]
where 
\[\begin{array}{rccc}
\mu_n : & \Z{\ell^n} & \to & \Z{\ell^{n+1} } \\
      & x \ \text{mod} \ \ell^n & \mapsto & \ell x \ \text{mod} \ \ell^{n+1}
\end{array}\]
and $\pi$ is the reduction map. 
If we take the long exact sequence in cohomology associated to this, we get 
the long exact sequence of projective systems:
\[\xymatrix{ 
   &  &  &  & \\
 \ar@{.>}[r]  &  \Coh \ar[r]^{\mu_n}  \ar@{.>}[u]        &  \Coho{\ell^{n+1}} \ar[r]    \ar@{.>}[u]        & \Coho{\ell}  \ar[r]  \ar@{.>}[u]   & H^{i+1}_c(S, \mathbb{Z} / \ell^n \mathbb{Z} ) \ar@{.>}[u]  \ar@{.>}[r] &          \\
 \ar@{.>}[r] &  \Coho{\ell^{n+1}} \ar[u]^{\alpha} \ar[r]^{\mu_{n+1}} & \Coho{\ell^{n+2}} \ar[u]^{\beta} \ar[r]  & \Coho{\ell} \ar[r] \ar@{=}[u] & 
H^{i+1}_c(S, \mathbb{Z} / \ell^{n+1} \mathbb{Z} ) \ar@{.>}[u] \ar@{.>}[r] &     \\
     &   \ar@{.>}[u]    &  \ar@{.>}[u]    & \ar@{.>}[u] & \ar@{.>}[u]   &  
}\]
where the first two vertical arrows $\alpha$ and $\beta$ correspond to the natural projections. 
In addition, one checks that the composite 
$\mu_n \circ \alpha$ is just multiplication by $\ell$. 
By the previous section, the groups are 
all finite, so the functor 
$\displaystyle \varprojlim_{n\geq 0}$ is exact (this is a particular case of the Mittag-Leffler condition). 
So, once we apply the functor $\displaystyle \varprojlim_{ n\geq 0}$,  
we obtain the long exact sequence:
\[ \xymatrix{
\ar@{.>}[r] & \Co  \ar[r]^{\times \ell } & \Co \ar[r]  & \Coho{\ell} \ar[r] &
H^{i+1}_c(S,\mathbb{Z}_\ell ) \ar@{.>}[r] & 
}\]
In particular , this implies that we have an injection:
\[0 \rightarrow H^q_c(S , \mathbb{Z}_\ell ) /(\ell.H^q_c(S , \mathbb{Z}_\ell )) \rightarrow 
H^q_c(S , \Z{\ell})\]
and since $H^q_c(S , \Z{\ell})$ is finite, we conclude that $H^q_c(S , \mathbb{Z}_\ell ) /(\ell.H^q_c(S , \mathbb{Z}_\ell ) )$ is finite. 
\end{proof}
Using again the exactness of $\varprojlim$ when all the groups are finite, 
and the long exact sequence \eqref{sel}, when $S$ is a locally closed semi-algebraic subset, 
$V\subseteq S$ is a semi-algebraic subset which is open in $S$ and $F = S \setminus V$, then we get 
a long exact sequence 
\[\cdots \to H^i_c(V , \mathbb{Q}_\ell ) \to H^i_c(S , \mathbb{Q}_\ell ) \to H^i_c(F , \mathbb{Q}_\ell ) \to  \cdots \]

\subsection{K\"unneth Formula}

\begin{defi}
Let $\Lambda$ be a ring.
A complex $M^{\bullet}$ of 
$\Lambda$-modules is called \emph{strictly perfect} if it is bounded, and for all $n$, $M^n$ is a finitely generated projective $\Lambda$-module.
\end{defi}

\begin{prop}
\label{kun}
 Let 
\[\xymatrix{
  & (X\times_S Y,R\times_S T)= (X,R)\times_S(Y,T)  \ar[dd]^{h} \ar[dl]^{g'} \ar[dr]^{f'} & \\
(X,R)  \ar[dr]^f & &(Y,T)\ar[dl]^g \\
  & S &
}\]
be a cartesian square of $k$-germs, where $R$ (resp. $T$) is locally closed in $X$ (resp. in $Y$), $X,Y$ and $S$ being some Hausdorff 
$k$-analytic spaces.
Let $\mathcal{F} \in \mathcal{D}^-( X , \Zln )$ and $\mathcal{G} \in \mathcal{D}^- (Y , \Zln ) $.
Then there is a canonical isomorphism: 
\[\mathbf{R}f_! \mathcal{F} \CB \mathbf{R}g_! \mathcal{G} \simeq 
\mathbf{R}h_! \left( (g'^*\mathcal{F}) \CB (f'^* ( \mathcal{G} ) \right) \]
\end{prop}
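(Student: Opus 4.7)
The plan is to derive the isomorphism from two classical ingredients---the projection formula and proper base change---transported to the setting of $k$-germs. The target chain of canonical isomorphisms is
\begin{align*}
\mathbf{R}f_! \mathcal{F} \CB \mathbf{R}g_! \mathcal{G}
&\simeq \mathbf{R}g_!\bigl(g^* \mathbf{R}f_! \mathcal{F} \CB \mathcal{G}\bigr) \\
&\simeq \mathbf{R}g_!\bigl(\mathbf{R}f'_! g'^* \mathcal{F} \CB \mathcal{G}\bigr) \\
&\simeq \mathbf{R}g_! \mathbf{R}f'_!\bigl(g'^* \mathcal{F} \CB f'^* \mathcal{G}\bigr) \\
&\simeq \mathbf{R}h_!\bigl(g'^* \mathcal{F} \CB f'^* \mathcal{G}\bigr),
\end{align*}
where the first and third isomorphisms are the projection formula applied to $g$ and $f'$ respectively, the second is proper base change for the cartesian square, and the fourth uses $h = g \circ f' = f \circ g'$.

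First I would establish the projection formula in the germ setting: for any morphism $\varphi : (X,R) \to S$ of $k$-germs with $R$ locally closed in $X$, and for $\mathcal{H}$ a complex of sheaves on $(X,R)$ and $\mathcal{K} \in \mathcal{D}^-(S,\Zln)$, there is a canonical isomorphism
\[\mathbf{R}\varphi_!\mathcal{H} \CB \mathcal{K} \simeq \mathbf{R}\varphi_!\bigl(\mathcal{H} \CB \varphi^*\mathcal{K}\bigr).\]
The strategy is the standard d\'evissage: by resolving $\mathcal{K}$ (locally on $S$) by a strictly perfect complex in the sense of the preceding definition, one reduces to the case where $\mathcal{K}$ is a shift of $\underline{\Zln}$, for which the formula is tautological; functoriality and globalization of the comparison morphism are then formal. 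The reduction to Berkovich's projection formula for morphisms of Berkovich spaces \cite[\S 5]{Berko93} proceeds by writing $R = F \cap U$ with $U \subset X$ open and $F \subset X$ closed and using the recipe of \cite[\S 5.2]{Berko93} to compute $\mathbf{R}\varphi_!$ on the germ through the ambient space.

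Next I would establish proper base change for the given cartesian square, namely $f^* \mathbf{R}g_! \mathcal{G} \simeq \mathbf{R}g'_! f'^* \mathcal{G}$. Berkovich's proper base change for analytic spaces \cite[\S 5.3]{Berko93} gives the statement on the ambient spaces, and the passage to germs is obtained by decomposing $\mathbf{R}g_!$ on $(Y,T)$: writing $T = V \cap F$ with $V \subset Y$ open and $F \subset Y$ closed, one has $\mathbf{R}g_! \simeq \mathbf{R}g_! \circ j_! \circ i^*$ where $i : F \hookrightarrow V$ and $j : V \hookrightarrow Y$, and both $j_!$ and $i^*$ satisfy base change. The analogous decomposition holds after pullback via $f$, so base change for the germ square follows from base change for the ambient square.

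Once both tools are in place, the four-step chain above is purely formal. The main obstacle is the bookkeeping required to transfer Berkovich's projection formula and proper base change from the category of Berkovich spaces to the category of $k$-germs while keeping the derived tensor products $\CB$ well-behaved; the strictly perfect resolutions introduced in the preceding definition are exactly the device that makes the d\'evissage clean and sidesteps convergence issues for complexes that are only bounded above.
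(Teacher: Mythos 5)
Your route---projection formula plus proper base change, assembled into the standard four-step chain---is a legitimate strategy and genuinely different from the paper's. The paper does not pass through these two auxiliary theorems at all: since Berkovich already has the K\"unneth isomorphism for honest analytic spaces over a base (\cite[7.7.3]{Berko93}), it reduces the germ case to the space case directly by a two-stage excision d\'evissage. First it assumes $T=Y$, arranges for $R$ to be closed in $X$ with open complement $U$, and compares the three cartesian squares over $S$ built from $U$, $X$ and $(X,R)$: the resulting map of distinguished triangles has isomorphisms at the $U$- and $X$-levels by \emph{loc.\ cit.}, hence at the $(X,R)$-level; it then repeats the same triangle argument in the $Y$-variable. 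The paper's approach buys economy: one d\'evissage, applied once to the statement actually wanted. Yours buys conceptual transparency and reusability (a germ-level projection formula is needed later anyway, as Proposition \ref{tor_dim}), but it costs two separate transfers to the germ setting instead of one, and in a stronger form than the paper ever establishes: your third step needs the projection formula for $f'$ whose \emph{target} is the germ $(Y,T)$, and your base-change square has germs at three of its four corners, whereas Proposition \ref{tor_dim} and Berkovich's base change take a genuine space as base. Each of these extensions is again proved by the same excision triangle, so your plan is free of circularity but strictly longer.

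Two points in your sketch need repair before it is a proof. First, an arbitrary $\mathcal{K}\in\mathcal{D}^-(S,\Zln)$ does not admit, even locally, a resolution by a strictly perfect complex of \emph{sheaves}; the strictly perfect complexes of the preceding definition are complexes of modules and are used in the paper only for the global-sections complexes when passing to $\Zl$-coefficients. The standard d\'evissage for the projection formula resolves $\mathcal{K}$ by sums of sheaves $j_!\underline{\Zln}$ for \'etale $j$ (or by a flat resolution), reducing to a case handled by base change. Second, the decomposition ``$\mathbf{R}g_!\simeq \mathbf{R}g_!\circ j_!\circ i^*$ with $i:F\hookrightarrow V$'' is garbled: $F$ is closed in $Y$ and need not lie in the open $V$. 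What you want is the distinguished triangle relating $\mathbf{R}(g|_V)_!$, $\mathbf{R}g_!$ and $\mathbf{R}(g|_{(Y,T)})_!$ for $T$ closed in $Y$ with open complement $V$, together with the fact that its formation commutes with pullback; that is exactly the triangle the paper exploits.
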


\begin{proof}
Since $R$ is locally closed, $R = U \cap F$ where $U$ is an open subset of $X$, and $F$ a closed subset, so 
$R$ is closed in $U$, and since the inclusion 
$(U,R) \to (X,R)$ is a quasi-immersion, replacing $X$ by $U$, 
we can assume that 
$R$ is closed in $X$. 
Let us set 
$U:= X \setminus R$ the complementary open subset. \par
In a first step, let us assume that 
$T=Y$, that is to say that 
$(Y,T) =(Y,Y) \simeq Y$. 
Remark that $(X,U)$ and $U$ are isomorphic as $k$-$\mathscr{G}$erms. 
We then consider the following three cartesian diagrams: 
$$\xymatrix{
  & (X,R)\times_S Y  \ar[dd]^{h} \ar[dl]_{g'} \ar[dr]^{f'} & \\
(X,R)  \ar[dr]_f & &Y\ar[dl]^g \\
  & S &
}$$

$$\xymatrix{
  & U\times_S Y  \ar[dd]^{h_U} \ar[dl]_{g_U'} \ar[dr]^{f_U'} & \\
 U  \ar[dr]_{f_U} & &Y\ar[dl]^{g_U} \\
  & S &
}$$

$$\xymatrix{
  & X\times_S Y  \ar[dd]^{h_X} \ar[dl]_{g_X'} \ar[dr]^{f_X'} & \\
X  \ar[dr]_{f_X} & &Y\ar[dl]^{g_X} \\
  & S &
}$$

We then obtain a commutative diagram of distinguished triangles: 
\[\xymatrix{
\left( \mathbf{R}{f_U}_! ( \mathcal{F}_{|U} ) \right) \CB ( \mathbf{R}g_!  \mathcal{G} ) \ar[r]^1 \ar[d] &
\mathbf{R}{h_U}_! \left( g_U'^*( \mathcal{F}_{|U} )\CB (f_U'^* (\mathcal{G}) ) \right) \ar[d] \\
( \mathbf{R}{f_X}_! (\mathcal{F}) ) \CB ( \mathbf{R} g_! \mathcal{G}) \ar[d] \ar[r]^2 &
\mathbf{R}{h_X}_! \left( g_X'^* \mathcal{F} \CB ( f_X'^*(\mathcal{G} )) \right) \ar[d] \\
\left(\mathbf{R}f_! ( \mathcal{F}_{| (X,R) } ) \right) \CB  ( \mathbf{R}g_! \mathcal{G} ) \ar[r]^3 \ar[d]^{ [-1]} & 
\mathbf{R}h_! \left( g'^*(\mathcal{F}_{|(X,R) } ) \CB (f'^* ( \mathcal{G} )) \right) \ar[d]^{[-1]} \\
 & 
}\]
According to \cite[7.7.3]{Berko93} the arrows 
$1$ and $2$ are isomorphisms. So $3$ (which is constructed in the same way as in \textit{loc.cit.}) is also an isomorphism.\par
Next, if $(Y,T)$ is a locally closed $k$-germ, as above we can 
assume that $T$ is closed in $Y$, so that if we set $V:=Y \setminus T$, 
$V$ is an open subset of $Y$ and $V \to (Y,V)$ is a quasi-immersion. 
Hence according to the first step, the proposition holds for the 
$k$-germs $(Y,V)$ and $Y$, so using again the distinguished triangle associated to 
$(Y,V)$ and $(Y,T)$, we can conclude.
\end{proof}

Exactly in the same way, 
we can generalize \cite[5.3.10]{Berko93} to $k$-germs:
 
\begin{prop}
\label{tor_dim}
 Let $\varphi :  Y \to X $ be a Hausdorff morphism of finite dimension, 
$\mathcal{G} \in D^b(Y, \Zln)$ of finite \emph{Tor}-dimension, and 
$\mathcal{F} \in D(X, \Zln)$. Let $T$ be a locally closed subspace of $Y$, and 
let us set $f = \varphi_{|(Y,T)}$. Then 
$\mathbf{R}f_! (\mathcal{G}_{|(Y,T) })$ is also of finite \emph{Tor}-dimension, and 
there is a canonical isomorphism 
$$ \mathcal{F} \CB \mathbf{R}f_! ( \mathcal{G}_{|(Y,T)} ) \simeq \mathbf{R}f_! ( f^*(\mathcal{F}) \CB \mathcal{G}_{|(Y,T)} ).$$
\end{prop}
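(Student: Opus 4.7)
The plan is to follow essentially verbatim the two-step strategy of Proposition \ref{kun}: first reduce to the case where $T$ is closed in $Y$ via the quasi-immersion trick, and then conclude by a triangulated five-lemma argument in which the known isomorphism at two of the three vertices comes from Berkovich's original \cite[5.3.10]{Berko93} applied to the honest Hausdorff spaces $Y$ and $Y \setminus T$.

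First, since $T$ is locally closed, write $T = W \cap F$ with $W$ open and $F$ closed in $Y$; the inclusion $(W,T) \to (Y,T)$ is a quasi-immersion, so after replacing $Y$ by $W$ (the restriction $\varphi_{|W}$ remains Hausdorff and of finite dimension) we may assume $T$ is closed in $Y$. Set $V := Y \setminus T$, and write $\varphi_V := \varphi_{|V}$ and $f := \varphi_{|(Y,T)}$. The open/closed decomposition of $Y$ produces, for any complex $\mathcal{H}$ on $Y$, a distinguished triangle
$$\mathbf{R}{\varphi_V}_!(\mathcal{H}_{|V}) \to \mathbf{R}\varphi_!(\mathcal{H}) \to \mathbf{R}f_!(\mathcal{H}_{|(Y,T)}) \xrightarrow{+1}$$
Taking this triangle with $\mathcal{H} = \mathcal{G}$ and then applying $\mathcal{F} \CB -$, and separately taking the triangle with $\mathcal{H} = \varphi^*\mathcal{F} \CB \mathcal{G}$ (using the compatibilities $(\varphi^*\mathcal{F})_{|V} = \varphi_V^*\mathcal{F}$ and $(\varphi^*\mathcal{F})_{|(Y,T)} = f^*\mathcal{F}$), one obtains two distinguished triangles whose third vertices are precisely $\mathcal{F} \CB \mathbf{R}f_!(\mathcal{G}_{|(Y,T)})$ and $\mathbf{R}f_!(f^*\mathcal{F} \CB \mathcal{G}_{|(Y,T)})$.

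Now Berkovich's original \cite[5.3.10]{Berko93}, applied to the Hausdorff finite-dimensional morphisms $\varphi$ and $\varphi_V$, simultaneously yields that $\mathbf{R}\varphi_!(\mathcal{G})$ and $\mathbf{R}{\varphi_V}_!(\mathcal{G}_{|V})$ are of finite Tor-dimension and identifies canonically the first two vertices of the two triangles above. Completing these identifications to a morphism of distinguished triangles and applying the five lemma in the triangulated category yields the desired isomorphism $\mathcal{F} \CB \mathbf{R}f_!(\mathcal{G}_{|(Y,T)}) \simeq \mathbf{R}f_!(f^*\mathcal{F} \CB \mathcal{G}_{|(Y,T)})$. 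The finite Tor-dimension of $\mathbf{R}f_!(\mathcal{G}_{|(Y,T)})$ then follows from the first triangle, where this object is sandwiched between two complexes of finite Tor-dimension.

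The main subtlety, exactly as in the proof of Proposition \ref{kun} where one must check that arrow $3$ is an isomorphism constructed in the same way as arrows $1$ and $2$, is to verify that the map produced at the third vertex by the triangulated calculus is the \emph{canonical} projection-formula map for the germ $(Y,T)$, and not merely some isomorphism. This amounts to a naturality check for the projection-formula map with respect to the open/closed distinguished triangle; no new ingredient beyond the pattern of \ref{kun} is required.
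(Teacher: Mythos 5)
Your proposal is correct and follows essentially the same route as the paper: the author's proof of this proposition is literally the remark that it is obtained ``exactly in the same way'' as Proposition \ref{kun}, i.e.\ by the quasi-immersion reduction to the case where $T$ is closed in $Y$, followed by the open/closed distinguished triangle with the two known vertices handled by \cite[5.3.10]{Berko93} for honest Hausdorff spaces. Your added remarks on the finite Tor-dimension of $\mathbf{R}f_!(\mathcal{G}_{|(Y,T)})$ and on checking that the induced map at the third vertex is the canonical one are exactly the points implicit in the paper's argument.
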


We now apply proposition \ref{kun} to the following situation:
we assume that $S= \mathcal{M}(k)$ and we consider the 
constant sheaves 
\[ F = \underline{\Z{\ell^n}}. \]
In that case $\mathbf{R}f_! = \mathbf{R} \Gamma_c $, and 
we have the following isomorphism in 
$\mathcal{D}^-(\Zln -\text{Mod})$:
\begin{equation}
\label{kun_fini}
\mathbf{R} \Gamma_c ( (X,R) ,\Zln ) \CB \mathbf{R} \Gamma_c ((Y,T) , \Zln ) \simeq 
\mathbf{R} \Gamma_c ( (X\times Y , R\times T) , \Zln).
\end{equation}
Our goal is now to pass from \Zln \ coefficients to 
\Ql \ coefficients which is achieved in proposition \ref{prop_coef}. 
The following arguments are a rewriting of 
the exposition of the $\ell$-adic K\"unneth formula 
for \'etale cohomology of schemes made in \cite[VI 8]{Milne}. \par
Using proposition \ref{tor_dim} with 
$\mathcal{F} = \Zlnm$ and $\mathcal{G} = \Zln$ 
yields the following isomorphism in 
$D^-(\Zlnm -\text{Mod})$:
\begin{equation}
\label{changement}
\mathbf{R} \Gamma_c((X,R) , \Zln ) \CB \Zlnm \simeq 
\mathbf{R} \Gamma_c ( (X,R) , \Zlnm)
\end{equation}
In what follows, we will work with complexes 
$M^{\bullet}$ of \Zl \ (resp. \Zln ) modules. 
According to the context, 
we will either see $M^{\bullet}$ as a complex of modules, or as its image in 
the derived category 
$\mathcal{D}( \mathbb{Z}_\ell -\text{Mod})$ (resp. $\mathcal{D}( \Zln -\text{Mod})$).
For instance when we will consider projective limits 
$\displaystyle \varprojlim_n M_n^{\bullet}$, 
this will always mean that 
the $M_n^{\bullet}$'s are complexes of $\Z{\ell^n}$-modules. 
In the same way, 
if $M^{\bullet}$ and $N^{\bullet}$ are complexes of $\Zl$-modules, 
$M^{\bullet} \otimes_{\Zl} N^{\bullet}$ will denote the total tensor product of 
complexes of $\Zl$-modules, whereas 
$M^{\bullet} \overset{L}{\otimes}_{\Zl} N^{\bullet}$ will denote the total tensor 
product of $M^{\bullet}$ and $N^{\bullet}$ seen as objects of the derived category.\par
Now we need the following lemma:
\begin{lemme}
\label{limproj}
For each $n\geq 1$, let 
$A_n^{\bullet}$ and $B_n^{\bullet}$ be 
strictly perfect complexes of
$\Zln$-modules, and for each $n\geq 2$ let 
$\varphi_n : A_n^{\bullet} \to A_{n-1}^{\bullet}$ 
(resp. $\psi_n : B_n^{\bullet} \to B_{n-1}^{\bullet}$)  be a
morphism of complex of $\Zln$-modules, such that 
the canonical morphism 
$A_{n}^{\bullet} \underset{\Zln}{\otimes} \Zlnm \to A_{n-1}^{\bullet}$ 
(resp. $B_{n}^{\bullet} \underset{\Zln}{\otimes} \Zlnm \to B_{n-1}^{\bullet}$) 
is a
quasi-isomorphism. 
Then there is a canonical isomorphism in $\mathcal{D}( \mathbb{Z}_\ell -\text{Mod})$:
\[\varprojlim_{n\geq 1}  ( A_n^{\bullet}  \PT B_n^{\bullet} ) \simeq 
( \varprojlim_{n \geq 1} A_n^{\bullet} )
\underset{\mathbb{Z}_\ell}{ \overset{L}{\otimes} }   ( \varprojlim_{n \geq 1} B_n^{\bullet} )\]
\end{lemme}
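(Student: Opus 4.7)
The plan is to reduce both sides to an honest (non-derived) tensor product of bounded complexes of finite free modules, where the interchange of $\varprojlim$ and $\otimes$ becomes a concrete calculation. The key observation is that over the local Artinian ring $\Zln$, a strictly perfect complex is the same thing as a bounded complex of finite free modules, and finite free modules behave very well under $l$-adic inverse limits.

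The first step — and the main obstacle — is to replace the given projective system $(A_n^\bullet, \varphi_n)$ by a quasi-isomorphic one $(\tilde A_n^\bullet)$ of strictly perfect complexes in which the transition maps descend to \emph{honest} isomorphisms of complexes $\tilde A_n^\bullet \PT \Zlnm \Req \tilde A_{n-1}^\bullet$, rather than only quasi-isomorphisms. This is built by induction on $n$, starting with $\tilde A_1^\bullet := A_1^\bullet$, and at each stage lifting the complex $\tilde A_{n-1}^\bullet$ of finite free $\Zlnm$-modules to a bounded complex of finite free $\Zln$-modules (adjusting differentials so that $d^2 = 0$ in characteristic $l^n$), and choosing the lift inside the quasi-isomorphism class of $A_n^\bullet$ using the hypothesis that $A_n^\bullet \PT \Zlnm \to A_{n-1}^\bullet$ is a quasi-isomorphism. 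The same construction is performed for $(B_n^\bullet)$. The delicate point here is to verify that such compatible models exist; once they do, their uniqueness up to homotopy gives canonicity of the final identification.

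With this reduction in hand, set $A^\bullet := \varprojlim_n \tilde A_n^\bullet$ and $B^\bullet := \varprojlim_n \tilde B_n^\bullet$, taken degree-wise. Because the transition maps are degree-wise surjections between finite free modules, each $A^k$ is a finitely generated free $\Zl$-module with $A^\bullet \PTZl \Zln \cong \tilde A_n^\bullet$, and similarly for $B^\bullet$. In particular $A^\bullet$ and $B^\bullet$ are strictly perfect over $\Zl$, so $A^\bullet \PTZlL B^\bullet = A^\bullet \PTZl B^\bullet$, and one computes
\[
\varprojlim_n \bigl( \tilde A_n^\bullet \PT \tilde B_n^\bullet \bigr) \;\simeq\;
\varprojlim_n \bigl( A^\bullet \PTZl B^\bullet \PTZl \Zln \bigr) \;\simeq\; A^\bullet \PTZl B^\bullet,
\]
where the last isomorphism uses that each $A^k \PTZl B^l$ is a finite free and hence $l$-adically complete $\Zl$-module. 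Putting everything together yields the desired canonical isomorphism $\varprojlim_n (A_n^\bullet \PT B_n^\bullet) \simeq (\varprojlim A_n^\bullet) \PTZlL (\varprojlim B_n^\bullet)$ in the derived category.
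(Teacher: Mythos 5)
Your proposal is correct and follows essentially the same route as the paper: both arguments hinge on lifting the two projective systems to strictly perfect complexes over $\mathbb{Z}_l$ whose reductions mod $l^n$ recover the $A_n^{\bullet}$ and $B_n^{\bullet}$ up to compatible quasi-isomorphism, after which the interchange of $\varprojlim$ and $\otimes$ is a direct computation with finite free modules. The ``delicate point'' you flag (existence and homotopy-compatibility of these lifts) is exactly what the paper delegates to the lemma of Freitag--Kiehl cited as [FK, I.12.5], and the remaining glossed step --- passing quasi-isomorphisms through $\varprojlim$, which uses Mittag--Leffler and the fact that tensoring with a projective complex preserves quasi-isomorphisms --- is spelled out in the paper's chain of isomorphisms but presents no real difficulty.
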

 
\begin{proof}
 According to 
\cite[I 12.5]{FK}, there exists 
a strictly perfect complex $A^{\bullet}$ of 
$\mathbb{Z}_\ell$-modules and for each 
$n$ a quasi-isomorphism 
$\alpha_n : A^{\bullet} / ( \ell^n A^{\bullet} ) \to A_n^{\bullet}$ such that
for all $n$  
the following diagram commutes up to homotopy:
\begin{equation}
\label{diag1}
\begin{gathered}
\xymatrix{
A^{\bullet} / ( \ell^n A^{\bullet}) \ar[r]^{\alpha_n} \ar[d]^{red} & A_n^{\bullet} \ar[d]^{\varphi_n} \\
A^{\bullet} / ( \ell^{n-1} A^{\bullet}) \ar[r]^-{\alpha_{n-1}}  & A_{n-1}^{\bullet} 
}
\end{gathered}
\end{equation}
and likewise there exists a strictly perfect complex 
of $\Zl$-modules $B^{\bullet}$ and some quasi-isomorphisms  
$\beta_n : B^{\bullet} / ( \ell^n B^{\bullet} ) \to B_n^{\bullet}$ such that 
the following diagram commutes up to homotopy:
\begin{equation}
\label{diag2}
\begin{gathered}
\xymatrix{
B^{\bullet} / ( \ell^n B^{\bullet}) \ar[r]^{\beta_n} \ar[d]^{red} & B_n^{\bullet} \ar[d]^{\psi_n} \\
B^{\bullet} / ( \ell^{n-1} B^{\bullet}) \ar[r]^-{\beta_{n-1}}  & B_{n-1}^{\bullet} 
} 
\end{gathered}
\end{equation}
Remind that if $M$ is a \Zl -module of finite type, there is a functorial isomorphism:
\begin{equation}
\label{prop_iso}
M \Req \varprojlim_{n\geq 1} M/(\ell^nM)
\end{equation}
We then obtain the following quasi-isomorphisms: 
\begin{equation}
\label{iso1}A^{\bullet}  \Req  \varprojlim_n (A^{\bullet} / (\ell^n A^{\bullet} ))  \Req \varprojlim_n (A_n^{\bullet} ), 
\end{equation}
\noindent where the first arrow is an isomorphism of complexes according to \eqref{prop_iso} and
the second arrow is a quasi-isomorphism  
according to Mittag Leffler condition and the fact that all 
the modules involved are of finite type. 
The similar results holds for $B^{\bullet}$.\par
We then obtain the following sequence of isomorphisms 
in $\mathcal{D}( \mathbb{Z}_\ell -\text{Mod})$:
\begin{align}
\label{isoa}( \varprojlim_n A_n^{\bullet} )
\underset{\mathbb{Z}_\ell}{ \overset{L}{\otimes} }   ( \varprojlim_n B_n^{\bullet} ) & 
\Leq A^{\bullet} \underset{\mathbb{Z}_\ell}{ \overset{L}{\otimes} }  B^{\bullet}\\
\label{isoa'} & \simeq  A^{\bullet} \underset{\mathbb{Z}_\ell}{\otimes}  B^{\bullet} \\
\label{isob}& \Req \varprojlim_n \left( 
(A^{\bullet} \underset{\mathbb{Z}_\ell}{\otimes}  B^{\bullet})/ 
( \ell^n ( A^{\bullet} \underset{\mathbb{Z}_\ell}{\otimes}  B^{\bullet} ) ) 
\right) \\ 
\label{isoc} & \Req  \varprojlim_n \left( 
 ( A^{\bullet}/( \ell^n A^{\bullet}) ) \PT  ( B^{\bullet} / ( \ell^n B^{\bullet} ) ) 
 \right) \\
\label{isoc'} & \Req  \varprojlim_n \left( 
 A_n^{\bullet} \PT  ( B^{\bullet}/ ( \ell^n B^{\bullet} ) ) 
 \right)\\ 
\label{isod}  & \Leq  \varprojlim_n(A_n^{\bullet} \PT B_n^{\bullet}).
\end{align}
The isomorphism \eqref{isoa} holds thanks to \eqref{iso1}, 
\eqref{isoa'} holds because $A^{\bullet}$ and 
$B^{\bullet}$ are flat, 
\eqref{isob} is remark \eqref{prop_iso}, 
\eqref{isoc} is base change for tensor product.\par
Finally to obtain \eqref{isoc'} we take the tensor product of 
the first (resp. second)
line of diagram \eqref{diag1} with  $B^{\bullet}/ ( \ell^n B^{\bullet} )$ 
(resp. $B^{\bullet}/ ( \ell^{n-1} B^{\bullet} )$). 
The resulting diagram still commutes up to homotopy
and since $B^{\bullet}/ ( \ell^n B^{\bullet} )$ is a projective complex, 
the horizontal lines are still quasi-isomorphisms. Hence (thanks to Mittag Leffler condition), we obtain \eqref{isoc'}.\par
Similarly, for \eqref{isod}, we take the tensor product of the first (resp. second) 
line of diagram \eqref{diag2} with 
$A_n^{\bullet}$ (resp. $A_{n-1}^{\bullet}$). 
Since $A_n^{\bullet}$ is a projective complex, the 
horizontal lines remain quasi-isomorphisms and we can conclude with the same argument.   
%\[
%\label{diagramc}
%\xymatrix{
%\left(A^{\bullet} / ( \ell^n\mathbb{Z}_\ell) \right) \PT ( B^{\bullet}/ ( \ell^n \mathbb{Z}_\ell ) )  %\ar[r]^{\alpha_n \otimes .} \ar[d]^{red} &
% A_n^{\bullet} \PT ( B^{\bullet}/ ( \ell^n \mathbb{Z}_\ell ) )  \ar[d]^{\varphi_n} \\
%A^{\bullet} / ( \ell^{n-1} \mathbb{Z}_\ell) \PTm ( B^{\bullet}/ ( \ell^{n-1} \mathbb{Z}_\ell ) )   %\ar[r]^-{\alpha_{n-1} \otimes . }  & 
%A_{n-1}^{\bullet} \PTm ( B^{\bullet}/ ( \ell^{n-1} \mathbb{Z}_\ell ) )
%}\]
\end{proof}

\begin{rem}
Note that we have implicitly used the following result: if 
$M_1^{\bullet}$,  $M_2^{\bullet}$ and $M_3^{\bullet}$ are bounded above
complexes of $\Lambda$-modules such that $M_3^{\bullet}$ is projective, and $f : M_1^{\bullet}  \to M_2^{\bullet}$ is a 
quasi-isomorphism, then 
$f\otimes id  : M_1^{\bullet} \otimes M_3^{\bullet}  \to M_2^{\bullet} \otimes M_3^{\bullet}$ 
is a quasi-isomorphism \cite[10.6.2]{Weib}
\end{rem}

\begin{prop}
\label{prop_coef}
 Let $(X,R)$, $(Y,T)$ be $k$-germs such that for all $n$, 
$\HXR$ and $\HYT$ have finite cohomology groups. Then the cohomology groups of $\HXY$ are also finite and for all $r\geq 0$ we have a 
canonical isomorphism:
\[ 
H^r_c \left( (X\times Y , R\times T) , \mathbb{Q}_\ell \right)
\simeq 
\bigoplus_{p+q=r} H^p_c \left((X,R) , \mathbb{Q}_\ell \right) \otimes H^q_c \left( (Y,T) , \mathbb{Q}_\ell \right) .\]
\end{prop}
\begin{proof} 
The complexes  
$\HXR$ and $\HYT$ have bounded 
cohomology groups, are of finite type by hypothesis, 
and according to proposition \ref{tor_dim} are of finite Tor-dimension, so we can 
choose some resolutions by some strictly perfect complexes of the projective systems:
\[\begin{array}{lcc}
K_n^{\bullet} & \rightarrow & \HXR \\
P_n^{\bullet} & \rightarrow & \HYT \\
Q_n^{\bullet} & \rightarrow & \HXY.
\end{array} \]
In addition, according to \eqref{kun_fini} we can find up to homotopy a quasi-isomorphism, of projective systems:
\begin{equation}
\label{iso_chan}
K_n^{\bullet} \PT P_n^{\bullet} \simeq Q_n^{\bullet}.
\end{equation}
Moreover according to \eqref{changement}, $K^{\bullet}_n$ and $P^{\bullet}_n$ 
fulfil  the hypothesis of lemma \ref{limproj}.
We then denote by 
\begin{align*}
 K^{\bullet} &= \varprojlim_n (K_n^{\bullet}) \\
 P^{\bullet} &= \varprojlim_n (P_n^{\bullet}) \\
 Q^{\bullet} &= \varprojlim_n (Q_n^{\bullet}).
\end{align*} 
Remark that (thanks to Mittag Leffler property again) 
\begin{align}
\label{Mittag1} H^p(K^{\bullet})\simeq H^p_c((X,R) , \mathbb{Z}_\ell ) \\ 
\label{Mittage2}H^p(P^{\bullet})\simeq H^p_c((Y,T) , \mathbb{Z}_\ell ) \\ 
\label{Mittage3} H^p(Q^{\bullet})\simeq H^p_c( (X\times Y , R\times T) , \mathbb{Z}_\ell ).
\end{align}
In $\mathcal{D}( \mathbb{Z}_\ell -\text{Mod})$ we consider the following 
sequence of isomorphisms:
\begin{align}
\label{isof1}
\displaystyle
K^{\bullet} \CBl P^{\bullet} & \simeq 
\varprojlim_n (K_n^{\bullet} ) \CBl  \varprojlim_n (P_n^{\bullet} ) \\ 
\label{isof2} \displaystyle & \simeq 
\varprojlim_n ( K_n^{\bullet} \PT P_n^{\bullet} ) \\
\label{isof3} & \simeq
\varprojlim_n (Q_n^{\bullet} ) \\
\label{isof4} & \simeq  Q^{\bullet}.
\end{align}
The isomorphism \eqref{isof1} holds by definition of $K^{\bullet}$ and $P^{\bullet}$, 
\eqref{isof2} holds thanks to lemma \ref{limproj},  
\eqref{isof3} is just a consequence of \eqref{iso_chan}, and 
\eqref{isof4} holds by definition of $Q^{\bullet}$.\par
We then obtain the following isomorphisms in 
$\mathcal{D}( \mathbb{Q}_\ell -\text{Mod})$:
\[ \left( K^{\bullet} \PTZlL \Ql \right) \PTQlL 
\left( P^{\bullet} \PTZlL \Ql \right) \simeq 
\left( K^{\bullet} \PTZlL P^{\bullet} \right) \PTZlL \Ql 
\simeq Q^{\bullet} \PTZlL \Ql \]
But since $\Ql$ is flat over \Zl,  
we can replace all the $\PTZlL \Ql$ by some 
$\otimes_{\Zl} \Ql$.
Finally, since $\Ql$ is a field, 
\[
H^r \Big( ( K^\bullet \otimes_{\Zl} \Ql ) \otimes_{\Ql} (P^\bullet \otimes_{\Zl} \Ql )\Big) 
\simeq \bigoplus_{p+q=r} H^p( K^\bullet \otimes_{\Zl} \Ql ) \otimes_{\Ql} 
H^q( P^\bullet \otimes_{\Zl} \Ql ).
\]
The result then follows from the isomorphisms \eqref{Mittag1}--\eqref{Mittage3}.
\end{proof}

We must mention that the proposition \ref{kun} is functorial in $S$. 
So let  
$(\X,R)$, $(\Y,T)$ be $k$-germs, with 
$\X$ (resp. $\Y$) a separated $\A$-scheme (resp. $\mathcal{B}$-scheme) of finite type, 
$\A$ (resp. $\mathcal{B}$) being some $k$-affinoid algebra and 
$R$ (resp. $T$) being some locally closed semi-algebraic subset of 
$\X^{an}$ (resp. $\Y^{an}$).  Then, there are isomorphisms of Galois modules:
\[\bigoplus_{p+q=r} H^p_c \left((\overline{\X^{an}},\overline{R}) , \mathbb{Q}_\ell \right) \otimes H^q_c \left( (\overline{\Y^{an}},\overline{T}) , \mathbb{Q}_\ell \right) 
\simeq H^r_c \left( (\overline{(\X\times \Y)^{an}} ,\overline{R\times T}),\mathbb{Q}_\ell \right).\]

\subsection{Statement of the main theorem}
We sum up all results of this section:
\begin{theo}
\label{theoprinc}
Let $k$ be a non-Archimedean complete valued field, $\A$ a $k$-affinoid algebra,  $\mathcal{X}$ a separated 
$\A$-scheme of finite type of dimension $d$, $U$ a locally closed semi-algebraic subset of 
$\mathcal{X}^{an}$, and $\ell\neq $char$(\tilde{k})$ be a prime number. 
We denote by 
$\pi: \overline{\mathcal{X}^{an}} \to \mathcal{X}^{an}$ the morphism defined in \ref{Galois} and we set $\overline{U} = \pi^{-1} (U)$.
\begin{enumerate}
 \item The groups 
$H^i_c( \overline{U} , \mathbb{Q}_\ell) $ are finite dimensional $\mathbb{Q}_\ell$-vector spaces, 
endowed with a continuous $\Gal$-action, and $H^i_c( \overline{U} , \mathbb{Q}_\ell) =0$ for 
$i>2d$.
\item  Let $V\subset U$ be a semi-algebraic subset which is open in $U$, 
and let $F=U \setminus V$. Then 
there is a $\Gal$-equivariant long exact sequence 
\[ \xymatrix{
\ar@{.>}[r] & H^i_c(\overline{V} , \mathbb{Q}_\ell ) \ar[r] &
H^i_c(\overline{U} , \mathbb{Q}_\ell ) \ar[r] & H^i_c(\overline{F} , \mathbb{Q}_\ell ) 
\ar[r] & H^{i+1}_c(\overline{V} , \mathbb{Q}_\ell ) \ar@{.>}[r] &
} \]
\item 
For all integer $n$ there are canonical $\Gal$-equivariant isomorphisms: 
\[ 
\bigoplus_{i+j=n} H^i_c \left(\overline{U} , \mathbb{Q}_\ell \right) \otimes_{\Ql} H^j_c \left( \overline{V}, \mathbb{Q}_\ell \right) 
\simeq H^n_c \left( \overline{U\times V} , \mathbb{Q}_\ell \right).
\]
\end{enumerate}
\end{theo}

\section{Analogous statements for adic spaces}
\label{huber}
In this section, $k$ will be a non-Archimedean algebraically closed non-trivially valued field, 
$\Lambda$ a finite group prime to the characteristic of 
$\tilde{k}$ and $\A$ a strictly $k$-affinoid algebra. \par
We want to stress out that in the previous sections, 
instead of working with the \'etale cohomology developed by Berkovich in 
\cite{Berko93}, we could also have used the theory of adic spaces and its \'etale 
cohomology theory, 
developed by R. Huber in
\cite{Hu96}. This might be interesting because this will define different groups (cf. remark \ref{diff}) and will apply for 
different semianalytic (resp. semi-algebraic) subsets (cf. remark \ref{sub}).  
To avoid confusion, we will denote by 
$H^q_{c,ad}(X,\Lambda)$ the groups defined by the cohomology with compact support of adic spaces. \par
In this framework, the analogue of a a $k$-germ $(X,S)$ is the notion of a pseudo-adic space $(X,S)$ over Spa$(k,k^\circ)$ 
[\ibid 1.10.3]. The quasi-immersions will be replaced by locally closed embeddings [\ibid 1.10.8 (ii) ], 
and the analogue of \cite[4.3.4]{Berko93} which states that cohomology is invariant by quasi-immersion is 
\cite[2.3.8]{Hu96} which states the same thing for locally closed embeddings. In Huber's theory though, 
compactly supported cohomology isn't defined as a derived functor, 
but with some compactification, like in the \'etale cohomology of schemes. 
Nonetheless, one can 
check that if $i : (X,S) \to (Y,T)$ is a locally closed embedding with 
$i(S)=T$, then 
$H^q_{c,ad} ((X,S) , i^* (\mathcal{F} ) )  \simeq  H^q_{c,ad} ((Y,T) , \mathcal{F} )$. Indeed, in this case, 
$i_!=i_*$ is an exact functor (it induces an equivalence of categories), 
so $R^+i_! = i_! = i_*$ [ \ibid 5.4.1]. So 
$R^+i_! ( i^* \mathcal{F} ) \simeq \mathcal{F}$, from what it follows that 
$H^q_{c,ad} ((X,S) , i^* (\mathcal{F} ) )  \simeq  H^q_{c,ad} ((Y,T) , \mathcal{F} )$.

\begin{rem}
One has to keep in mind that compactly supported cohomology does not give the same groups in both theories, 
for instance if $X$ is the closed disc of radius one:
\[
\label{diff}
\begin{array}{|l|c|c|c|}
  \hline
i  & 0 & 1 & 2 \\
\hline
H^i_{c,Ber}(X, \Lambda ) & \Lambda & 0 & 0 \\
\hline
H^i_{c,ad}(X, \Lambda ) & 0 & 0 & \Lambda  \\
\hline
 \end{array}\]
\end{rem}
 
In section \ref{secfinaff}, we systematically used the long exact sequence 
\[\cdots  H^{q-1}_c(R, \Lambda )\to  H^q_c(T , \Lambda ) \to H^q_c(S, \Lambda ) \to H^q_c(R, \Lambda )\to \cdots \]
where
\begin{equation} 
\label{TT}
T= \{x \in S  \ \big|  \ |f(x)| < |g(x)| \}
\end{equation} and 
$R = S \setminus T$.
Although the closed-open long exact sequence is still valid for pseudo-adic spaces
 \cite{Hu96}[5.5.11 (iv)], $T$ as defined in \eqref{TT} 
is not an open subset of $S$ any more, so we cannot apply this long exact sequence.
In fact the typical example of an open subset of an adic space is
\begin{equation} 
\label{T} T=\{x \in S  \ \big| \ |f(x) | \leq |g(x)| \neq 0 \}.
\end{equation}
It will be then possible in that case to 
apply this long exact sequence (which includes the case $\{f\neq 0 \} = \{0 \leq |f| \neq 0 \}$).

\begin{rem}
\label{remeq}
 A subset $S$ is a finite Boolean combination of subsets of the form $\{ |f| \leq |g|\neq 0 \}$ if 
and only if it is a finite Boolean combination of subsets of the form 
$\{ |f| \leq |g| \}$.
\end{rem}
For instance, 
$\{|f|\leq |g| \} = \{ |f| \leq |g| \neq 0 \} \cup \left( \{g\neq 0 \} \cup \{f \neq 0 \} \right)^c$. \par
Let $\mathcal{A}$ be a (strictly) $k$-affinoid algebra. We will say that a subset 
$S \subset \text{Spa}(\mathcal{A}, \mathcal{A}^\circ)$ is semianalytic if 
it is a Boolean combination of subsets of the form
\[ \{x \in \text{Spa}(\mathcal{A}, \mathcal{A}^\circ) \st |f(x)| \leq |g(x)| \} \]
where $f,g \in \mathcal{A}$. This definition slightly differs from the one given for 
Berkovich spaces: here we do not allow real constants in the inequalities. 

\begin{lemme}
\label{lemmehu}
Let $X=\text{Spa}(\mathcal{A}, \mathcal{A}^\circ)$ be the affinoid 
adic space associated to $\mathcal{A}$, $S = \cap_{i=1}^n S_i$ 
where for each $i$, $S_i$ is of the form 
$S_i = \{x\in X \ \big| \ |f_i(x) | \leq |g_i(x)|  \neq 0\}$ 
or $S_i = \{x \in X \ \big| \ |f_i(x)|>|g_i(x) | \ \text{or} \ g_i(x)=0 \}$, 
with $f_i,g_i \in \mathcal{A}$. Then the groups 
$H_{c,ad}^q(S,\Lambda)$ are finite.
\end{lemme}
\begin{proof}
Mimic the proof of lemma \ref{lemme1} using that 
\[ \{x\in X \ \big| \ |f_i(x)| \leq |g_i(x)| \neq 0 \}^c = \{x\in X \ \big| \ |f_i(x)| > |g_i(x)| \ \text{or} \ g_i(x)=0 \}.  \]
The key point here (that makes 
possible the base case of the induction) is that for an affinoid adic 
space $Y$, the groups 
$H^q_{c,ad}(Y,\Lambda)$ are finite \cite{HufinII}[5.1].
\end{proof}

\begin{prop}
Let $T$ be a locally closed, semianalytic 
subset of $X= \text{Spa} (\mathcal{A}, \mathcal{A}^\circ)$. 
Then the groups $H^q_{c,ad}(T , \Lambda)$ are finite.
\end{prop}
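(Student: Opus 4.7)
The plan is to adapt the strategy of Lemmas \ref{prop21}, \ref{lemme21} and Proposition \ref{finitude} to the adic setting, using Proposition \ref{prophu} as the base case in place of \cite[Cor 5.5]{Berko94}. By Remark \ref{remeq}, one may fix functions $f_1,g_1,\ldots,f_r,g_r \in \mathcal{A}$ so that $T$ is a finite boolean combination of the open rational subsets $\{x \in X \mid |f_i(x)| \leq |g_i(x)| \neq 0\}$; their complements $\{x \in X \mid |f_i(x)| > |g_i(x)| \text{ or } g_i(x)=0\}$ are closed. Introduce the partition of $X$ by the pieces
\[
C_I \;=\; \left(\bigcap_{i \in I} \{|f_i| \leq |g_i| \neq 0\}\right) \cap \left(\bigcap_{j \notin I} \{|f_j| > |g_j| \text{ or } g_j=0\}\right), \quad I \subseteq \{1,\ldots,r\},
\]
and observe, exactly as before the statement of Lemma \ref{lemme21}, that every globally semi-analytic subset has the form $C_A = \bigsqcup_{I \in A} C_I$ for some $A \subseteq \mathcal{P}(\{1,\ldots,r\})$.

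Next I would prove the adic analogue of Lemma \ref{lemme21}: if
\[
C \;=\; C_A \cap \bigcap_{i=1}^n \{x \in X \mid |F_i(x)| \Diamond_i |G_i(x)| \neq 0\}
\]
(with $\Diamond_i \in \{<,\leq\}$) is locally closed, then the groups $H^q_{c,Hub}(C,\Lambda)$ are finite. The argument is by induction on $r$. For $r=0$ the set $C$ is an intersection of basic subsets of the two forms allowed in Proposition \ref{prophu}, so that proposition applies directly. For the inductive step, split $A$ into $A_1 = \{P \in A \mid r+1 \in P\}$ and $A_2 = A \setminus A_1$ and write $C = C_1 \sqcup C_2$ with
\[
C_1 = C \cap \{|f_{r+1}| \leq |g_{r+1}| \neq 0\}, \qquad C_2 = C \cap \{|f_{r+1}| > |g_{r+1}| \text{ or } g_{r+1}=0\}.
\]
Then $C_1$ is open in $C$ and $C_2$ is its closed complement, both are locally closed in $X$, and each is described by only $r$ of the original pairs together with one extra strict-inequality condition on $(f_{r+1},g_{r+1})$, so the inductive hypothesis applies to both. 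Finally, the open/closed long exact sequence for compactly supported cohomology of pseudo-adic spaces \cite[5.5.11(iv)]{Hu96} yields finiteness of $H^q_{c,Hub}(C,\Lambda)$.

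Specialising this auxiliary result to $n=0$, $C = T$ gives the desired conclusion. The main point requiring care is the same bookkeeping subtlety as in Lemma \ref{lemme21}: one must carry along the auxiliary collection of strict conditions $\{|F_i| \Diamond_i |G_i| \neq 0\}$ in the inductive statement, so that after removing the $(r+1)$-th pair the piece $C_2$ (where $|f_{r+1}| > |g_{r+1}|$ has become part of the defining data) can be folded into that auxiliary collection without increasing $r$. The other delicate point is checking that $C_1$ and $C_2$ inherit local closedness from $C$, which is immediate since $\{|f_{r+1}| \leq |g_{r+1}| \neq 0\}$ is open in $X$ and its complement is closed. Beyond this, the argument is a direct transcription of the Berkovich proof.
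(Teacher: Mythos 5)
Your overall strategy is exactly the paper's: the paper's proof of this proposition is the two-line remark that, by Remark \ref{remeq}, $T$ is a finite boolean combination of sets of the form $\{|f|\leq|g|\neq 0\}$, and that one then adapts the proof of Lemma \ref{lemme21}, with Proposition \ref{prophu} playing the role of the base case. Your write-up fleshes out that adaptation along the same lines (partition into the pieces $C_I$, induction on $r$, split into $C_1\sqcup C_2$, open/closed long exact sequence).

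There is, however, a slip in the one place you yourself flag as delicate. You carry the auxiliary side conditions in the form $\{x \mid |F_i(x)| \Diamond_i |G_i(x)| \neq 0\}$ with $\Diamond_i \in \{<,\leq\}$, transcribing Lemma \ref{lemme21}. In the Berkovich setting that works because the complement of $\{|f|\leq \lambda|g|\}$ is $\{|f|>\lambda|g|\}=\{|g|<\lambda^{-1}|f|\}$, i.e.\ again a condition of the allowed shape with $f$ and $g$ swapped. In the adic setting this fails: the complement of the open set $\{|f_{r+1}|\leq|g_{r+1}|\neq 0\}$ is $\{|f_{r+1}|>|g_{r+1}| \ \text{or}\ g_{r+1}=0\}$, which is \emph{not} equal to $\{|g_{r+1}|<|f_{r+1}|\neq 0\}=\{|g_{r+1}|<|f_{r+1}|\}$ --- the two differ on the zero locus $V(f_{r+1},g_{r+1})$. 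So the condition acquired by $C_2$ cannot be folded into your auxiliary collection as stated, and for the same reason the base case $r=0$ with a strict $\Diamond_i$ is not literally an instance of Proposition \ref{prophu}, whose basic sets are $\{|f|\leq|g|\neq 0\}$ and $\{|f|>|g|\ \text{or}\ g=0\}$. The fix is immediate and is exactly what the paper has built into Proposition \ref{prophu}: state the auxiliary inductive lemma with side conditions each of one of those two forms. Then $C_1$ acquires a condition of the first form, $C_2$ a condition of the second, both remain locally closed, and the induction closes with Proposition \ref{prophu} as the $r=0$ case. With that correction your argument is complete and coincides with the paper's intended proof.
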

\begin{proof}
According to remark \ref{remeq}, we can assume that $T$ is a finite union 
of subsets $S$ as in lemma \ref{lemmehu}. Hence we can adapt 
the proof of lemma \ref{lemme21}.
\end{proof}
In this context, if $X$ is a quasi-separated adic space of finite type over $k$, we will say that 
$S$ is \emph{locally semianalytic} if there exists a finite affinoid covering 
$\{U_i\}$ of $X$ such that $S\cap U_i$ is semianalytic in $U_i$ for all $i$. 
Adapting the proofs of proposition \ref{propber}, we obtain:
\begin{prop}
\label{proph1}
 Let $X$ be a quasi-separated adic space of finite type over $k$, and $S$ a locally closed,  
locally semianalytic subset of $X$. Then the groups 
$H^q_{c,ad}(S, \Lambda)$ are finite.
\end{prop}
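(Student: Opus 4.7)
The plan is to follow the two-step strategy already used in lemmas \ref{prop31} and \ref{prop32}, but in the adic setting and using the modified building blocks adapted to the Huber topology (cf. remark \ref{remeq} and proposition \ref{prophu}). Since $X$ is a quasi-separated adic space of finite type over $k$, one can find (using a suitable Nagata-type compactification for adic spaces, or alternatively compactifying a formal model) a proper adic space $\overline{X}$ over $k$ in which $X$ sits as an open subspace; $S$ remains locally closed and rigid semi-analytic in $\overline{X}$. Since $\overline{X}$ is proper, hence quasi-compact, it admits a finite cover $\overline{X}=U_1\cup\cdots\cup U_n$ by open affinoid adic subspaces such that each $S\cap U_i$ is globally semi-analytic in $U_i$, after refining the given cover witnessing the rigid semi-analyticity of $S$.

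Once this setup is in place, I would argue by induction on $n$, exactly as in the proof of lemma \ref{prop31}. For $n=1$ the statement reduces to the previous proposition about a locally closed globally semi-analytic subset of an affinoid adic space. For the induction step, set
\[
T = S\cap (U_1\cup\cdots\cup U_{m+1}),\quad R = S\cap(U_1\cup\cdots\cup U_m),\quad W = T\setminus R.
\]
Then $R$ is closed in $T$ (quasi-separatedness of $\overline{X}$ ensures that $U_1\cup\cdots\cup U_m$ is closed in $U_1\cup\cdots\cup U_{m+1}$), and $W\subseteq U_{m+1}$ is locally closed. Using quasi-separatedness again, each $U_i\cap U_{m+1}$ is an affinoid subdomain of $U_{m+1}$, hence closed and globally semi-analytic in $U_{m+1}$ (by Gerritzen--Grauert), so $W$ is also a locally closed globally semi-analytic subset of $U_{m+1}$. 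The previous proposition gives finiteness for $H^q_{c,Hub}(W,\Lambda)$, the induction hypothesis gives it for $H^q_{c,Hub}(R,\Lambda)$, and then the open--closed long exact sequence \cite{Hu96}[5.5.11 (iv)] gives finiteness for $H^q_{c,Hub}(T,\Lambda)$. After the induction is complete we conclude $H^q_{c,Hub}(S,\Lambda)$ is finite using the fact that cohomology is invariant under the locally closed embedding $(X,S)\hookrightarrow(\overline{X},S)$.

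The main obstacle I expect is the preliminary verification that the long exact sequences one invokes really apply in Huber's framework: as emphasized in the paragraphs preceding proposition \ref{prophu}, the "open" piece in an open--closed decomposition must be of the form $\{|f|\leq|g|\neq 0\}$ rather than $\{|f|<|g|\}$, and remark \ref{remeq} is used to rewrite the globally semi-analytic description accordingly. One must therefore be careful that in each induction step the pair $(W,R)$ produced is genuinely an open--closed pair in the adic topology, and that, at the base case, the globally semi-analytic set $S\cap U_i$ has been rewritten in the form required by proposition \ref{prophu} before the long exact sequence is applied. The remaining ingredients --- the existence of a proper compactification of a quasi-separated adic space of finite type, invariance of compactly supported cohomology under locally closed embeddings of pseudo-adic spaces \cite{Hu96}[2.3.8 and 5.4.1], and finiteness on affinoid adic spaces \cite{HufinII}[5.1] --- are formal and available in the literature, so the combinatorial induction proceeds just as in the Berkovich case.
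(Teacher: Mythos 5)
Your overall strategy is the paper's: the proof of Proposition \ref{proph1} is obtained by running the induction of Lemma \ref{prop31} and Proposition \ref{propber} over a finite affinoid cover, with the base case supplied by the preceding proposition on locally closed globally semi-analytic subsets of an affinoid adic space. However, two points need correction.

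First, the compactification step is both unnecessary and unjustified. An adic space of finite type over $k$ is in particular quasi-compact, so Definition \ref{defsan} already hands you a finite cover $X=U_1\cup\cdots\cup U_n$ by affinoid subdomains with each $S\cap U_i$ globally semi-analytic; there is nothing to compactify. A ``Nagata-type compactification'' of a merely quasi-separated (not separated) adic space is not an off-the-shelf result you can invoke --- Huber's universal compactifications require separatedness --- and here it buys you nothing. Compactification enters only in Proposition \ref{proph2}, where one starts from a scheme and $\mathcal{X}^{ad}$ need not be quasi-compact.

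Second, your justification of the induction step is topologically wrong in the adic setting. You claim that $U_1\cup\cdots\cup U_m$ is closed in $U_1\cup\cdots\cup U_{m+1}$ by quasi-separatedness, so that $R$ is closed in $T$. Quasi-separatedness only gives that the intersections $U_i\cap U_{m+1}$ are quasi-compact; in the adic topology a quasi-compact open (in particular an affinoid subdomain) is \emph{not} closed in general --- this is exactly the phenomenon illustrated in Remark \ref{sub}: rank-$2$ specializations of points of an affinoid subdomain can escape it. The same applies to your parenthetical ``hence closed'' for $U_i\cap U_{m+1}$. The correct decomposition is the opposite of the Berkovich one: since the $U_i$ are \emph{open} in the adic topology, $R=T\cap(U_1\cup\cdots\cup U_m)$ is open in $T$ and $W=T\setminus R$ is its closed complement (still locally closed and, by Gerritzen--Grauert, globally semi-analytic in $U_{m+1}$). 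The long exact sequence of \cite{Hu96}[5.5.11 (iv)] then applies with these roles, and the two-out-of-three finiteness argument goes through unchanged. With these two repairs your argument coincides with the intended proof.
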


We can define similarly semi-algebraic subsets $S\subset \X^{ad}$ where $\X$ is an $\A$-scheme of finite type, like in definition \ref{defisemalg}, but without the real constants 
$\lambda$. We then obtain:
\begin{prop}
\label{proph2}
Let $\mathcal{X}$ be a  separated $\A$-scheme of finite type, $S$ a locally closed 
semi-algebraic subset of
$\mathcal{X}^{ad}$.  
Then the groups 
$H^q_{c,ad}(S, \Lambda)$ are finite.
\end{prop}

\begin{comment}
\begin{rem}
 According to \cite{HufinII}[5.1] , these finiteness results for 
$H^q_{c,ad}(S,\Lambda)$ remain true if we replace the constant sheaf 
$\underline{\Lambda}$ by a sheaf $\mathcal{F} \in \mathcal{C} (X,S)$. 
This includes the constructible sheaves of \cite{Hu96}[2.7], which are locally constant along 
affinoid domains, but the class $\mathcal{C} (X,S)$ is much more general, and allows the sheaves to be locally constant along 
Zariski-closed subsets.
\end{rem}
\end{comment}

\begin{rem} 
\label{sub}
As indicated above, if $X$ is a $k$-analytic (resp. adic) affinoid space, the class of 
locally closed subspaces will be different according to the theories. To illustrate this we want to give two examples.
Let us consider $X$ the closed bidisc of radius $1$: $X = \mathcal{M}({k\{x,y\}}) $ or 
Spa($k\{x,y\}, k^\circ\{x,y\})$ according to theory we are using. Remind that a subset $U$ is locally closed if and only if $U$ is open in $\overline{U}$. \par 

\textbf{A subset which is locally closed for the topology of adic spaces but not for the Berkovich topology.} 
Let $V= \{p \in X \ \big| \ |x(p)|>|y(p)| \} \cup \{p_0\}$. 
Here $p_0$ is the rigid point corresponding 
to the origin.
Then $V$ is closed in the adic topology. Indeed its complement is 
\[ V^c = \{ p\in X\setminus \{p_0\} \st |x(p)| \leq |y(p)| \}
= \{ p\in X \st |x(p)| \leq |y(p)| \neq 0\} \]
which is open by definition of the topology of adic spaces. 
But we claim that $V$ is not locally closed for the Berkovich topology.
To show this, for $r,s \leq 1$ let 
$\eta_{r,s} \in X$ be defined by 
\[ 
\eta_{r,s}( \sum_{i,j\in \N}  a_{i,j}x^iy^j ) = \max_{i,j\in \N}  |a_{i,j}| r^is^j .\]
Then for $r>s$, $\eta_{r,s} \in V$, and 
for $0<r\leq 1$, $\eta_{r,r} \in \overline{V} \setminus V$. Now, if $V$ was open in $\overline{V}$, since 
it contains $p_0$, it should contain  
$\eta_{r,r}$ for $r$ small enough which is a contradiction.\par 

\textbf{A subset which is locally closed for the Berkovich topology but not for the topology of adic spaces.}
Let us consider the set 
$U=\{ p\in X \ \big| \ |x(p)| \leq |y(p)| \}$. Then 
$U$ is closed for the Berkovich topology but not locally-closed for the topology of adic spaces.
Indeed, if $p_0$ is the rigid point corresponding to the origin $(0,0)$, $p_0 \in U$, but $U$ is not a neighbourhood of 
$p_0$ in $\overline{U}$ with respect to  the topology of adic spaces. 
Otherwise for some $\varepsilon >0$, $U$ would contain a subset 
$B =\{ p\in \overline{U} \ \big| \ |x(p)| \leq \varepsilon \ \text{and} \ |y(p)| \leq \varepsilon \}$.
But then for $0<\alpha <\varepsilon$ 
with $\alpha \in |k^{\times}|$, we can define 
$\eta_{\alpha} \in \overline{U}$ a valuation of rank $2$ such that
$\eta_{\alpha} (x) = \alpha$ and 
$\eta_{\alpha}(y) = \alpha_- $ where $\alpha_-<\alpha$ but is infinitesimally closed. 
Now, $\eta_{\alpha} \in \overline{U}$ because 
$\eta_{\alpha}\in \overline{ \{ \eta_{\alpha,\alpha} \} } $ 
(cf. definition above).
So by definition of $B$, $\eta_{\alpha} \in B$. 
So we should have $\eta_\alpha \in U$, which is false. 
So $U$ is not locally closed for the adic topology.
\end{rem}

\bibliographystyle{alpha}
\bibliography{bibli}

\end{document}